\DeclareMathAlphabet{\pazocal}{OMS}{zplm}{m}{n}
\DeclareMathOperator{\modu}{mod}
\DeclareMathOperator{\vol}{vol}			
\theoremstyle{plain}
\newtheorem{theorem}{Theorem}
\newtheorem{corollary}[theorem]{Corollary}
\newtheorem{definition}[theorem]{Definition}
\newtheorem{lemma}[theorem]{Lemma}
\newtheorem{fact}[theorem]{Fact}
\newtheorem{remark}[theorem]{Remark}
\renewcommand{\Pr}{\mathbb{P}}
\newcommand{\E}{\mathbb{E}}
\theoremstyle{remark}
\newcommand{\Set}{S}
\newcommand{\Setbar}{\bar{S}}
\newcommand{\Part}{{\mathcal A}}
\newcommand{\set}{{\bf s}}
\newcommand{\dd}{{\bf d}}
\newcommand{\qq}{{\bf q}}
\newcommand{\eSet}{e(\Set)}
\newcommand{\eSetbar}{e(\Setbar)}
\newcommand{\eSetSetbar}{e(\Set,\Setbar)}
\newcommand{\whp}{with high probability}
\newcommand{\Bin}[2]{\textrm{Bin}\left(#1,#2\right)}
\newcommand{\Pra}[1]{\Pr\left(#1\right)}
\newcommand{\Gnp}{G(n,p)}
\newcommand{\modA}{\modu_{\Part}}
\newcommand{\BB}{{\mathcal B}}
\newcommand{\err}{{\bf Er}}
\newcommand{\superscript}[1]{\ensuremath{^{\textrm{#1}}}}
\def\wu{\superscript{*}}
\def\wg{\superscript{$\star$}}
\begin{document}
	
	\title[New bounds on the modularity of $G(n,p)$]{New bounds on the modularity of $\mathbf{G(n,p)}$} 
	
	\author[K.~Rybarczyk]{Katarzyna Rybarczyk\wu\footnote{\wu kryba@amu.edu.pl, Adam Mickiewicz University, Pozna\' n, Poland}} 
	
	\author[M.~Sulkowska]{Ma{\l}gorzata Sulkowska\wg\footnote{\wg malgorzata.sulkowska@pwr.edu.pl,
	Wroc{\l}aw University of Science and Technology, Department of Fundamentals of Computer Science, Poland}}

\thanks{This research was funded in part by National Science Centre, Poland, grant OPUS-25 no 2023/49/B/ST6/02517.}

\keywords{Binomial random graph, modularity, Chernoff's inequality}

\begin{abstract}
 Modularity is a parameter indicating the presence of community structure in the graph. Nowadays it lies at the core of widely used clustering algorithms. We study the modularity of the most classical random graph, binomial $\Gnp$. In 2020 McDiarmid and Skerman proved, taking advantage of the spectral graph theory and a specific subgraph construction by Coja-Oghlan from 2007, that there exists a constant $b$ such that with high probability the modularity of $\Gnp$ is at most $b/\sqrt{np}$. The obtained constant $b$ is very big and not easily computable. We improve upon this result showing that a constant under $3$ may be derived here. Interesting is the fact that it might be obtained by basic probabilistic tools. We also address the lower bound on the modularity of $\Gnp$ and improve the results of McDiarmid and Skerman from 2020 using estimates of bisections of random graphs derived by Dembo, Montanari, and Sen in 2017. 
\end{abstract}

\maketitle

\section{Introduction}

Identifying communities in a graph is a task of a great importance. It has a number of practical applications, varying from identifying the groups of common interests in social networks, through classifying fake news or spam, searching articles on related topic, identifying proteins with the same biological functions, optimizing large technological infrastructures, up to networks visualization \cite{KaminskiBook,NewmanBook}. Therefore various algorithmic approaches to graph clustering arose over the years. Just to mention a few: some are based on random walks, others on hierarchical clustering, label propagation, or spectral bisection method  \cite{KaminskiBook}. Recently widely used algorithms that offer a good trade-off between the time complexity and the quality of identified communities are Louvain algorithm \cite{BlGuLaLe08} and its follow-ups -  Leiden algorithm \cite{TrWaEc19}, and Tel-Aviv algorithm \cite{GiSh_23}. They are similar in nature, i.e., they heuristically try to optimize a parameter called \emph{modularity}. Modularity (introduced by Newman and Girvan in 2004 \cite{Newman2004}) serves as a measure of the presence of community structure in the graph. As mentioned, it is commonly utilized in practice, however many theoretical questions about it, even for very classical families of graphs, still remain unanswered. 

We study the modularity function of the most classical random graph, binomial $\Gnp$. First such results were given by McDiarmid and Skerman just in 2020 \cite{McDiarmid2020}. One can check the appendix therein for a summary of results on modularity for various graph classes (\cite{Bolla2015,Brandes2008,Majstorovic2014,McDiarmidScerman2018,McDiarmidSkerman_dense2023,Prokhorenkova2017}) and further references. More recent discoveries one finds in \cite{ChFoSk21} by Chellig, Fountoulakis and Skerman (for random graphs on the hyperbolic plane), \cite{LaSu23} by Laso{\'n} and Sulkowska (for minor-free graphs), \cite{LiMi22} by Lichev and Mitsche (for $3$-regular graphs and graphs with a given degree sequence), \cite{Rybarczyk2025} by Rybarczyk and Sulkowska and \cite{McDRSS_2024} by McDiarmid, Rybarczyk, Skerman, and Sulkowska (for preferential attachment graphs), and \cite{Rybarczyk2025randomintersectiongraphs} by Rybarczyk for random intersection graphs.

The authors of \cite{McDiarmid2020} showed that the modularity of $\Gnp$ with high probability decreases at the rate $1/\sqrt{np}$ by $n$ tending to infinity (note that $np$ may be asymptotically interpreted as an average degree). In this paper we study the upper and the lower bound on the modularity of $\Gnp$. We show how, using basic probabilistic methods, significantly improve the upper bound presented in \cite{McDiarmid2020}. We also improve upon the lower bound from \cite{McDiarmid2020} using the estimates of bisections of random graphs from \cite{Dembo2017}.

Notation, previous results, and our findings are presented in Section~\ref{Sec:notation_result}. In Section~\ref{Sec:main_upper_bound} we derive a new upper bound on the modularity of $\Gnp$. Doing that, we use a technical lemma bounding the number of edges of induced subgraphs of $\Gnp$ - its proof may be found in Section \ref{Sec:proof_of_lemma}. The last section is devoted to the lower bound on $\Gnp$.

\section{Notation and statement of results} \label{Sec:notation_result}

Let $\mathbb{N}$ denote the set of natural numbers, $\mathbb{N} = \{1,2,3,\ldots\}$. For $n \in \mathbb{N}$ let $[n]=\{1,2,\ldots,n\}$. If a random variable $X$ follows a binomial distribution with parameters $n \in \mathbb{N}$ and $p \in [0,1]$ we write $X \sim \Bin{n}{p}$. For the functions $f, g: \mathbb{N} \rightarrow \mathbb{R}$ we use the notation $f(n) = o(g(n))$ and $f(n) \ll g(n)$ interchangeably and understand it as $\lim_{n \rightarrow \infty} f(n)/g(n) = 0$. Unless otherwise stated, $o(\cdot)$ relates to $n \to \infty$. By $f(n) \sim g(n)$ we mean $\lim_{n \rightarrow \infty} f(n)/g(n) = 1$.
We say that an event $\mathcal{E} = \mathcal{E}_n$ occurs with high probability (whp) if the probabilities $\Pra{\mathcal{E}_n}$ tend to $1$ as $n$ tends to infinity. All inequalities in the paper are true for $n$ large enough and, for clarity of notation, we omit floors and ceilings when it does not influence a reasoning.

The graphs considered are finite, simple and undirected. Thus a graph is a pair $G=(V(G),E(G))$, where $V(G)$ is a finite set of vertices and $E(G) \subseteq V(G)^{(2)}$ where, for any set $\Set$, $S^{(2)}$ stands for a set of all $2$-element subsets of $\Set$. 
Let $e(G) = |E(G)|$ and for $\Set \subseteq V(G)$ set $\Setbar = V(G) \setminus \Set$, $\eSet = |\{e \in E(G) \cap \Set^{(2)}\}|$, and $\eSetSetbar = |\{e \in E(G): e \cap \Set \neq \emptyset \wedge e \cap \Setbar \neq \emptyset\}|$. The degree of a vertex $v \in V(G)$ in $G$, denoted by $\deg(v)$, is the number of edges to which $v$ belongs, 
i.e., $\deg(v) = |\{e \in E(G): v \in e\}|$. 
We define the volume of $ \Set \subseteq V$ in $G$ by $\vol(\Set) = \sum_{v \in \Set} \deg(v)$. By the volume of the graph, $\vol(G)$, we understand $\vol(V(G))$.

We also introduce several basic definitions originating from spectral graph theory. We do not make use of its tools in our proofs, however we refer to the results that do that. For a graph $G$ on the vertex set $[n]$ by $A=\{a_{i,j}\}_{i,j \in [n]}$ denote its adjacency matrix (i.e. $a_{i,j} = 1$ if $\{i,j\} \in E(G)$ and $a_{i,j} = 0$ otherwise) and by $D=\{d_{i,j}\}_{i,j \in [n]}$ its diagonal degree matrix (i.e. $d_{i,i} = \deg(i)$ and  $d_{i,j} = 0$ for $i \neq j$). A matrix ${\mathcal{L}}=I-D^{-1/2}AD^{-1/2}$ is called a normalized Laplacian of $G$ (here $D^{-1/2}$ stands for a diagonal matrix with diagonal entries $\deg(i)^{-1/2}$ if $\deg(i) \neq 0$ and $0$ otherwise). By $0 = \lambda_0 \le \lambda_1 \le \ldots \le \lambda_{n-1} \le 2$ denote the eigenvalues of $\mathcal{L}$ (see \cite{ChungBook}). Then $\bar\lambda(G) = \max_{i \neq 0}|1-\lambda_i| = \max\{|1-\lambda_1|, |1-\lambda_{n-1}|\}$ is called the spectral gap of $G$. 


We work with the classical binomial random graph and state its definition as it appears in \cite{JLRBook} by Janson, {\L}uczak, and Ruci{\'n}ski.

\begin{definition}[Binomial random graph $G(n,p)$] 
	Let $n \in \mathbb{N}$ and $p \in [0,1]$. The binomial random graph is defined by taking the set of all graphs on vertex set $[n]$ as a sample space $\Omega$, defining $\mathcal{F} = 2^{\Omega}$ as an event space, and for $G \in \Omega$ setting
	\[
	\Pra{G} = p^{e(G)}(1-p)^{{n \choose 2} - e(G)}.
	\]
\end{definition}
Obviously, $G(n,p)$ may be seen as a result of ${n \choose 2}$ independent Bernoulli trials, one for each pair of vertices from $[n]$, with the probability of success (which is including a particular edge in the graph) equal to $p$. Throughout the paper we consider $p$ as a function of $n$, $p=p(n)$, and often denote $\dd = \dd(n) = np$. Moreover, with $\Set \subseteq V(G(n,p))$ we often associate a function $\set = \set(n)$ such that $|\Set| = \set n$.

We focus on bounding a graph parameter called modularity for $\Gnp$. Its formal definition is given just below.

\begin{definition}[Modularity, \cite{Newman2004}] \label{def:modularity}
	Let $G=(V(G),E(G))$ be a graph with at least one edge. For a partition $\Part$ of $V(G)$ define a modularity score of $G$ as
	\begin{equation} \label{Eq:DefModularity1}
		\modA(G)=\sum_{\Set\in\Part}\left(\frac{\eSet}{e(G)}-\frac{\vol(\Set)^2}{\vol(G)^2}\right) =\sum_{\Set\in\Part} \frac{4\eSet e(G) - \vol(\Set)^2}{4e(G)^2}.
	\end{equation}
	Modularity of $G$ is given by
	\[
	\modu(G) = \max_{\Part}\modA(G),
	\]
	where maximum runs over all the partitions of the set $V(G)$. For a graph $G$ with no edges set $\modu(G)=0$.
\end{definition}

A single summand of the modularity score is the difference between the fraction of edges within $\Set$ and the expected fraction of edges within $\Set$ if we considered a certain random multigraph on $V(G)$ with the expected degree sequence given by $G$ (see, e.g.,~\cite{Kaminski2019}). It is easy to check that $\modu(G) \in [0,1)$. Higher values of $\modu(G)$ are meant to indicate the clearer community structure in $G$. 

As already mentioned, the first results on the modularity of $\Gnp$ appeared in $2020$ and are due to McDiarmid and Skerman \cite{McDiarmid2020}. Among others, they established a general relation between the modularity of $G$ and a spectral gap of $G$.

\begin{lemma}[Lemma 6.1 of \cite{McDiarmid2020}] \label{Lemma:mod_spectral_gap}
	Let $G$ be a graph with at least one edge and no isolated vertices. Let $\bar\lambda(G)$ be a spectral gap of $G$. Then
	$$
		\modu(G) \leq \bar\lambda(G).
	$$
\end{lemma}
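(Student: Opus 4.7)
The plan is to rewrite the modularity of a fixed partition as a quadratic form in the normalized adjacency matrix $N = D^{-1/2} A D^{-1/2}$ (whose eigenvalues are $1-\lambda_0, \ldots, 1-\lambda_{n-1}$), and then bound that form via $\bar\lambda(G)$. The no-isolated-vertex hypothesis is what makes $D$ invertible and $N$ well-defined; the at-least-one-edge hypothesis ensures $\vol(G) = 2 e(G) > 0$.

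First, for each part $\Set \in \Part$ I would introduce the vector $y_\Set = D^{1/2} \mathbf{1}_\Set$. A direct computation gives
\[
y_\Set^T N y_\Set = \mathbf{1}_\Set^T A \mathbf{1}_\Set = 2\, e(\Set), \qquad \|y_\Set\|^2 = \vol(\Set).
\]
The vector $y_V := D^{1/2}\mathbf{1}$ is the Perron eigenvector of $N$ with eigenvalue $1$ (since $A\mathbf{1}$ is the degree vector), and $\|y_V\|^2 = \vol(G)$. Decompose $y_\Set = \alpha_\Set y_V + z_\Set$ with $z_\Set \perp y_V$; then $\alpha_\Set = \vol(\Set)/\vol(G)$ and $\|z_\Set\|^2 = \vol(\Set) - \vol(\Set)^2/\vol(G)$.

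Expanding $y_\Set^T N y_\Set$ and noting that the cross term vanishes because $Ny_V = y_V$ and $z_\Set \perp y_V$, I get
\[
2\,e(\Set) = \alpha_\Set^2\, \vol(G) + z_\Set^T N z_\Set = \frac{\vol(\Set)^2}{\vol(G)} + z_\Set^T N z_\Set.
\]
Dividing by $\vol(G) = 2e(G)$ rewrites a single summand of the modularity score as
\[
\frac{e(\Set)}{e(G)} - \frac{\vol(\Set)^2}{\vol(G)^2} = \frac{z_\Set^T N z_\Set}{2\,e(G)}.
\]

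Because $z_\Set$ is orthogonal to the eigenvector $y_V$ corresponding to the eigenvalue $1 = 1-\lambda_0$ of $N$, the spectral theorem applied to $N$ on $y_V^{\perp}$ yields $|z_\Set^T N z_\Set| \le \bar\lambda(G)\,\|z_\Set\|^2$. Summing over $\Set \in \Part$ gives
\[
\modA(G) \le \frac{\bar\lambda(G)}{2\,e(G)} \sum_{\Set \in \Part} \left( \vol(\Set) - \frac{\vol(\Set)^2}{\vol(G)} \right) \le \bar\lambda(G),
\]
where the last step uses $\sum_\Set \vol(\Set) = \vol(G)$ and $\sum_\Set \vol(\Set)^2 \ge 0$. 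Taking the maximum over $\Part$ finishes the proof.

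The main obstacle is spotting the correct change of variables $y_\Set = D^{1/2}\mathbf{1}_\Set$, which simultaneously makes $y_\Set^T N y_\Set$ equal to $2e(\Set)$ and makes the Perron eigenvector $y_V$ play the role of the ``mean'' that must be subtracted off before the spectral gap can be invoked; without the orthogonal projection onto $y_V^{\perp}$ one would only get the trivial bound $\modu(G) \le 1$.
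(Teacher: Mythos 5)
Your proof is correct, and it is essentially the same spectral argument used by McDiarmid and Skerman in the cited Lemma~6.1 of \cite{McDiarmid2020} (the present paper cites that lemma without reproducing a proof): write the per-part modularity contribution as a Rayleigh quotient of $N=D^{-1/2}AD^{-1/2}$ on the component of $D^{1/2}\mathbf{1}_\Set$ orthogonal to $D^{1/2}\mathbf{1}$, and bound it by the spectral gap. One minor wording point: calling $D^{1/2}\mathbf{1}$ ``the Perron eigenvector'' presupposes connectivity, which is not assumed; but your argument only uses that it \emph{is} an eigenvector of $N$ with eigenvalue $1$, and in the disconnected case $\bar\lambda(G)=1$ makes the bound trivial, so nothing breaks.
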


The spectral gap of $\Gnp$ was already investigated many years earlier. In \cite{Chung2003} from $2003$ Chung, Lu, and Vu studied eigenvalues of random graphs with given expected degrees. Their results applied to $\Gnp$ may be formulated as follows (compare also (1.2) in \cite{Coja-Oghlan2007} by Coja-Oghlan).
\begin{theorem}[Theorem 3.6 of \cite{Chung2003}] \label{Thm:ChungLuVu}
	Let $n \in \mathbb{N}$, $p = p(n) \in (0,1]$. Set $\dd = \dd(n) =np$ and assume that $\dd \gg (\ln{n})^2$. Let also $\Gnp$ be a binomial random graph and $\bar{\lambda}(\Gnp)$ its spectral gap. Then {\whp}
	$$
		\bar{\lambda}(\Gnp) \leq (1+o(1))\frac{4}{\sqrt{\dd}}.
	$$
\end{theorem}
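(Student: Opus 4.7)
The plan is to compare $M = D^{-1/2}AD^{-1/2}$ with a deterministic reference whose spectrum is transparent. A natural choice is $\overline{M} = (1/n)J$ (with $J$ the all-ones matrix): its top eigenvalue is $1$ on $\mathbf{1}$, and all remaining eigenvalues are $0$, which matches the expected structure of $M$ when the graph is ``close to regular''. Since the eigenvalues of $\mathcal{L}=I-M$ are $1-(\text{eigenvalues of }M)$, Weyl's inequality turns the spectral gap $\bar\lambda(\Gnp)$ into (essentially) the operator norm of $M - \overline{M}$ restricted to the orthogonal complement of the top eigenvector, and the aim becomes to show this norm is at most $(1+o(1))\cdot 4/\sqrt{\dd}$ whp.

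Two ingredients drive the estimate. First, degree concentration: each $\deg(v) \sim \Bin{n-1}{p}$, so Chernoff's inequality and a union bound over $[n]$ give $\max_v |\deg(v) - \dd|/\dd = o(1)$ whp whenever $\dd \gg \ln n$; this controls $\|D^{-1/2} - \dd^{-1/2} I\|_{\mathrm{op}}$ and lets one replace $D^{-1/2}$ by $\dd^{-1/2}I$ with error negligible on the scale $1/\sqrt{\dd}$. Second, a F\"uredi--Koml\'os type bound on the centred adjacency matrix, $\|A - \E[A]\|_{\mathrm{op}} \le (2+o(1))\sqrt{\dd}$ whp, proved via the trace method: one estimates $\E\bigl[\mathrm{tr}\bigl((A-\E[A])^{2k}\bigr)\bigr]$ for $k$ of order $\ln n$, observing that only closed walks traversing a tree with every edge used exactly twice contribute at leading order $n(2\sqrt{\dd})^{2k}$. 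Combining the two, $\|\dd^{-1}(A - pJ)\|_{\mathrm{op}} \le (2+o(1))/\sqrt{\dd}$ on $\mathbf{1}^{\perp}$ (note that $pJ$ is killed on $\mathbf{1}^{\perp}$, while $p(J-I)$ differs from $pJ$ by a rank-one matrix of norm $p=o(1)$).

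A final check is required because the true top eigenvector of $M$ is $D^{1/2}\mathbf{1}/\|D^{1/2}\mathbf{1}\|$ rather than $\mathbf{1}/\sqrt{n}$; the angle between these two unit vectors is $o(1)$ by degree concentration, which introduces an additive term of order $o(1/\sqrt{\dd})$ when transferring a bound on $\mathbf{1}^{\perp}$ to a bound on $(D^{1/2}\mathbf{1})^{\perp}$. After tracking the accumulated errors from the Chernoff step, the F\"uredi--Koml\'os bound, and this top-eigenvector perturbation, the constant deteriorates from the sharp value $2$ to the stated value $4$, and the claim follows.

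The main obstacle, unsurprisingly, is the spectral norm bound on $A-\E[A]$: it is the only step that genuinely uses random-matrix machinery, namely careful enumeration of closed walks in the trace-moment expansion. The hypothesis $\dd \gg (\ln n)^2$ is precisely what makes both the trace-moment truncation at $k\asymp \ln n$ and the union bound in the degree concentration step succeed with room to spare.
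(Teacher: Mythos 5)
The paper does not prove this theorem at all --- it is quoted as Theorem 3.6 of Chung, Lu, and Vu~\cite{Chung2003} --- so there is no in-paper argument to compare against; what follows is a review of your sketch on its own terms.

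Your framework (compare $M=D^{-1/2}AD^{-1/2}$ against a rank-one reference, control $D^{-1/2}$ by degree concentration, invoke a F\"uredi--Koml\'os--type bound on $\|A-\E A\|_{\mathrm{op}}$) is the standard modern route to such estimates and is sound in outline, but there is a concrete error in the bookkeeping. You assert that replacing the true top eigenvector $\phi=D^{1/2}\mathbf 1/\|D^{1/2}\mathbf 1\|$ by $\mathbf 1/\sqrt n$ ``introduces an additive term of order $o(1/\sqrt{\dd})$.'' That is false. Writing $\eps_i=(\deg(i)-\dd)/\dd$, degree concentration does give $\max_i|\eps_i|=o(1)$, so the angle between $\phi$ and $\mathbf 1/\sqrt n$ is $o(1)$; but the operator-norm contribution is the \emph{sine} of that angle, which equals the norm of the difference of the two rank-one projectors, and a direct computation gives
\[
\cos^2\angle\bigl(\phi,\tfrac{1}{\sqrt n}\mathbf 1\bigr)
= \frac{\bigl(\sum_i\sqrt{\deg(i)}\bigr)^2}{n\sum_i\deg(i)}
= 1-\tfrac14\bigl(\overline{\eps^2}-\bar\eps^2\bigr)+O\bigl(\overline{|\eps|^3}\bigr),
\]
where $\bar\eps=\tfrac1n\sum_i\eps_i$ and $\overline{\eps^2}=\tfrac1n\sum_i\eps_i^2$. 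Since $\deg(i)\sim\mathrm{Bin}(n-1,p)$ has variance $\approx\dd$, the empirical variance $\overline{\eps^2}-\bar\eps^2$ concentrates at $\Theta(1/\dd)$, so $\sin\angle(\phi,\mathbf 1/\sqrt n)=\Theta(1/\sqrt{\dd})$, not $o(1/\sqrt{\dd})$. The same order appears when the other rank-one piece $p(D^{-1/2}\mathbf 1)(D^{-1/2}\mathbf 1)^{\!T}$ is replaced by $\tfrac1n J$. These are genuine first-order contributions on the very scale you are trying to control, and they are one of the reasons the theorem carries a constant $4$ rather than $2$. Your closing sentence then appeals to ``accumulated errors'' to inflate $2$ to $4$, which contradicts your earlier claim that all such errors are $o(1/\sqrt{\dd})$: you cannot have both. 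A correct proof must carry the $\Theta(1/\sqrt{\dd})$ rank-one corrections through the triangle inequality explicitly (and must also be careful that the clean F\"uredi--Koml\'os constant $2+o(1)$ for $\|A-\E A\|$ is available in the full range $\dd\gg(\ln n)^2$; the sharp version is usually stated under a stronger lower bound on $\dd$, which is part of why the Chung--Lu--Vu argument settles for a cruder estimate and hence a larger constant).
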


Obviously, by Lemma \ref{Lemma:mod_spectral_gap} and Theorem \ref{Thm:ChungLuVu} one gets the whp upper bound on $\modu(\Gnp)$ of the order $1/\sqrt{\dd}$ for a certain range of $\dd$.

\begin{corollary} \label{Cor:spectral_4}
	Let $n \in \mathbb{N}$, $p = p(n) \in (0,1]$ and let $\Gnp$ be a binomial random graph. Set $\dd = \dd(n) =np$ and assume that $\dd \gg (\ln{n})^2$. Then {\whp}
	$$
		\modu(\Gnp) \leq (1+o(1))\frac{4}{\sqrt{\dd}}.
	$$
\end{corollary}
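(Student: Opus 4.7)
The plan is to simply chain together the two results stated immediately above: apply Theorem \ref{Thm:ChungLuVu} to bound the spectral gap of $\Gnp$ whp, and then invoke Lemma \ref{Lemma:mod_spectral_gap} to convert this into a bound on $\modu(\Gnp)$. The only subtlety is that Lemma \ref{Lemma:mod_spectral_gap} requires the graph to contain at least one edge and to have no isolated vertices, so a short preparatory step is needed to confirm that these structural properties hold whp under the hypothesis $\dd \gg (\ln n)^2$.

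First I would verify the hypothesis of Lemma \ref{Lemma:mod_spectral_gap}. Since $np = \dd \gg (\ln n)^2 \gg \ln n$, the probability that a fixed vertex is isolated is bounded by $(1-p)^{n-1} \le e^{-p(n-1)} = n^{-\omega(1)}$, so a union bound over the $n$ vertices shows that whp $\Gnp$ has no isolated vertex; in particular it has at least one edge. Denote this whp event by $\mathcal{E}_1$. Next, by Theorem \ref{Thm:ChungLuVu} applied to the regime $\dd \gg (\ln n)^2$, there is a whp event $\mathcal{E}_2$ on which
$$
\bar\lambda(\Gnp) \le (1+o(1))\frac{4}{\sqrt{\dd}}.
$$

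On the intersection $\mathcal{E}_1 \cap \mathcal{E}_2$, which also has probability $1-o(1)$, Lemma \ref{Lemma:mod_spectral_gap} applies and gives
$$
\modu(\Gnp) \le \bar\lambda(\Gnp) \le (1+o(1))\frac{4}{\sqrt{\dd}},
$$
which is exactly the claimed bound. There is essentially no obstacle beyond bookkeeping; the statement is a direct corollary and the only thing one must be careful about is checking that the preconditions of Lemma \ref{Lemma:mod_spectral_gap} are met by $\Gnp$ in this density regime, which follows at once from the standard isolated-vertex calculation recalled above.
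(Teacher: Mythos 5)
Your proof is correct and is essentially the paper's own argument: the corollary is explicitly presented as an immediate consequence of Lemma~\ref{Lemma:mod_spectral_gap} combined with Theorem~\ref{Thm:ChungLuVu}, which is exactly what you do. Your extra step verifying whp that $\Gnp$ has no isolated vertices (so that Lemma~\ref{Lemma:mod_spectral_gap} applies) is a sensible piece of bookkeeping that the paper leaves implicit.
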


The $1/\sqrt{\dd}$ growth rate of $\modu(\Gnp)$ when $\dd$ is above $1$ and not too big was first conjectured in 2006 by Reichardt and Bornholdt \cite{Reichardt2006}. In 2020 McDiarmid and Skerman formally confirmed it proving the following result.

\begin{theorem}[Theorem 1.3 and Theorem 4.1 of \cite{McDiarmid2020}] \label{Thm:McDiar_Sker}
	Let $n \in \mathbb{N}$, $p = p(n) \in (0,1]$, and $\Gnp$ be a binomial random graph. Set $\dd = \dd(n) =np$. Then there exists a constant $b$ such that with high probability
	$$
		\modu(\Gnp) \le \frac{b}{\sqrt{\dd}}.
	$$
	Moreover, there exist constants $c_0$ and $a$ such that 
	\begin{enumerate}
		\item{if $\dd$ satisfies $c_0 \leq \dd < n - c_0$ for sufficiently large $n$, then {\whp}
		\[
			\modu(\Gnp) \ge \frac{1}{5}\frac{\sqrt{1-p}}{\sqrt{\dd}},
		\]
		}
		\item{if $\dd$ satisfies $1 \leq \dd < n - c_0$ for sufficiently large $n$, then {\whp}
		\[
			\modu(\Gnp) \ge a \frac{\sqrt{1-p}}{\sqrt{\dd}}.
		\]
		}
	\end{enumerate}
\end{theorem}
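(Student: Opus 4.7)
The plan is to handle the upper and lower bounds independently, since they call for different techniques, but both ultimately reduce to controlling quantities that concentrate well in $\Gnp$.

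For the upper bound the quickest route is through the spectral gap. In the regime $\dd \gg (\ln n)^2$ one simply reads off the statement from Corollary~\ref{Cor:spectral_4}: any constant $b>4$ works. The difficulty lies in the sparser regime $\dd=O((\ln n)^2)$, where $\bar{\lambda}(\Gnp)$ itself need not be small because atypically high-degree vertices create outlying eigenvalues. The idea of Coja-Oghlan is to peel off a vanishing fraction of ``bad'' vertices -- those of degree much larger than $\dd$ together with vertices lying in unusually dense neighbourhoods -- to obtain an induced subgraph $G' \subseteq \Gnp$ on $(1-o(1))n$ vertices whose normalised Laplacian does satisfy $\bar{\lambda}(G')=O(1/\sqrt{\dd})$. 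The remaining work is to compare $\modu(\Gnp)$ with $\modu(G')$: given an optimal partition $\Part$ of $V(\Gnp)$, restrict it to $V(G')$ and argue that reshuffling the deleted vertices changes the modularity score by at most an additive $O(|V(\Gnp)\setminus V(G')|/n)$ term, which {\whp} is of smaller order than $1/\sqrt{\dd}$; Lemma~\ref{Lemma:mod_spectral_gap} then finishes the argument.

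For the lower bound, the plan is to exhibit a single bipartition $\{\Set,\Setbar\}$ achieving the target. Take $\Set\subseteq [n]$ of size $n/2$ minimising the cut $\eSetSetbar$. Using $\E[\vol(\Set)]=\vol(\Gnp)/2$ together with standard Chernoff concentration for $\vol(\Set)$ and $e(\Gnp)$, the modularity of this two-class partition satisfies {\whp}
\[
\modA(\Gnp)= 1-\frac{\eSetSetbar}{e(\Gnp)}-\frac{\vol(\Set)^2+\vol(\Setbar)^2}{\vol(\Gnp)^2}\ge \frac{1}{2}-\frac{\eSetSetbar}{e(\Gnp)}-o\!\left(\frac{1}{\sqrt{\dd}}\right),
\]
so it suffices to upper bound the minimum balanced bisection by $\frac{n\dd}{4}-c\sqrt{1-p}\, n\sqrt{\dd}$ {\whp} for some $c>0$. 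For $c_0\le \dd\le n-c_0$ this follows from the Dembo--Montanari--Sen bisection estimates of~\cite{Dembo2017}, and substitution yields $\modu(\Gnp)\ge 2c\sqrt{1-p}/\sqrt{\dd}$; tracking the constants then produces the $1/5$ of part~(i). For $1\le \dd<c_0$ the sharp asymptotic bisection bound is not available, so one would replace it by a first moment over balanced bipartitions: since $\binom{n}{n/2}=2^{(1+o(1))n}$ and Chernoff gives an exponentially small tail probability for a fixed cut, the expected number of bipartitions with cut deficit $t=\Theta(n\sqrt{\dd(1-p)})$ is super-polynomial, and a standard second-moment concentration step then upgrades existence in expectation to existence {\whp}, giving the weaker constant $a$ of part~(ii).

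The step I expect to be the main obstacle is the comparison $\modu(\Gnp)\le \modu(G')+o(1/\sqrt{\dd})$ after the Coja-Oghlan deletion in the upper bound, because a near-optimal class of $\Gnp$ could be mostly composed of deleted vertices, and the effects on both the $\eSet/e(\Gnp)$ term and the volume term must be controlled simultaneously. This is precisely the technical heart of the McDiarmid--Skerman argument, and also the reason their constant $b$ ends up being very large; one of the contributions of the present paper is to bypass this route entirely in favour of a direct Chernoff-plus-union-bound argument on partitions, which is what allows recovering a small explicit constant.
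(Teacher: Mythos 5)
This theorem is quoted from McDiarmid and Skerman rather than proved in the paper, so there is no in-paper proof to compare against; what the paper gives is a prose account of their upper-bound strategy (extract the Coja-Oghlan induced subgraph with no very-high or very-low degree vertices, bound its spectral gap, apply Lemma~\ref{Lemma:mod_spectral_gap}, then argue that modularity is robust under the deletion), plus the explicit statement that the lower bound is improved in this paper by importing the Dembo--Montanari--Sen bisection asymptotics. Your reconstruction of the upper bound matches that account, including the correct diagnosis that the deletion/robustness step is where the large constant $b$ is lost.

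Your reconstruction of the lower bound, however, conflates McDiarmid--Skerman's original argument with what this paper proves in Theorem~\ref{Thm:Main2}. You invoke Dembo--Montanari--Sen (Theorem~\ref{Tw:MinCutGnm}, Corollary~\ref{Cor:MinCutGnp}) to settle part~(i), but that result is stated for $p=\dd/n$ with $\dd$ a \emph{constant} independent of $n$ (the $o_\dd(\cdot)$ error is an asymptotic over constants $\dd\to\infty$, not over $n$), so it cannot cover the range $c_0\le\dd<n-c_0$ claimed in part~(i), which includes $\dd\to\infty$ with $n$ and indeed $p$ bounded away from $0$. Tracking your own constants also exposes the misattribution: the Dembo--Montanari--Sen plug-in gives modularity $\approx P_*/\sqrt{\dd}\approx 0.76/\sqrt{\dd}$, which is precisely Theorem~\ref{Thm:Main2} of this paper and roughly four times the $1/5$ constant of McDiarmid--Skerman — had they used this tool there would be nothing left to improve. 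The $\sqrt{1-p}$ factor in their bound is the tell-tale of what they actually did: it is the $\sqrt{p(1-p)}$ that appears in the standard deviation of the cut, pointing to a direct second-moment/concentration argument on balanced bipartitions that is valid uniformly in $p$ up to $1-c_0/n$. In short, the strategy you sketch for part~(ii) is closer to the truth and is what needs to carry \emph{both} parts of the lower bound; restricting Dembo--Montanari--Sen to part~(i) both misattributes the tool and fails to cover the stated range of $\dd$.
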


First, we concentrate on the upper bound on $\modu(\Gnp)$. The authors of \cite{McDiarmid2020} derived their upper bound in Theorem \ref{Thm:McDiar_Sker} using quite sophisticated tools. Following the construction from~\cite{Coja-Oghlan2007} they extracted a particular induced subgraph $H$ of $\Gnp$, not possessing vertices of very high or very low degrees. Next, they took advantage of the result for a spectral gap of $H$ from \cite{Coja-Oghlan2007} and applied Lemma \ref{Lemma:mod_spectral_gap} to get the whp upper bound on the modularity of $H$. Finally, they proved that modularity is robust under removing not too many edges from the graph to justify that the modularity of $\Gnp$ {\whp} will not differ too much from the modularity of $H$. From their proof it follows that the constant $b$ in Theorem \ref{Thm:McDiar_Sker} is huge and, in practice, not easily computable.

In this paper we show that the better upper bound on $\modu(\Gnp)$, with a specific small constant under $3$, may be obtained using basic probabilistic tools. Our proof gives also the straightforward insight into how the $1/\sqrt{\dd}$ growth rate of the uppper bound is related to the fluctuations of the number of edges of the induced subgraphs of $\Gnp$. The first one of two main theorems of this paper is stated below.

\begin{theorem}\label{Thm:Main}
	Let $n \in \mathbb{N}$, $p = p(n) \in (0,1)$, and $G(n,p)$ be a binomial random graph. 
	Set $\dd = \dd(n) =np$ and assume that $16.17 \le \dd \ll n$. 
	Then with high probability 
	$$
		\modu(\Gnp)\le \left(\frac{3+2\sqrt{2}}{2}\right) \frac{1}{\sqrt{\dd}} \approx \frac{2.91}{\sqrt{\dd}}.
	$$
\end{theorem}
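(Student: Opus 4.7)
For an arbitrary partition $\Part=\{S_1,\ldots,S_k\}$ of $V(G)$ with $s_i=|S_i|/n$, the strategy is to bound the two sums in
\[
\modA(G)=\sum_{i}\frac{e(S_i)}{e(G)}-\sum_{i}\frac{\vol(S_i)^2}{\vol(G)^2}
\]
by showing, uniformly in $\Part$, that each lies within $O(1/\sqrt{\dd})$ of the common value $\sum_i s_i^2$; since they would cancel exactly in expectation, such deviation bounds translate directly into a bound on $\modu(\Gnp)$ of the same order.

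The workhorse is a uniform upper bound on the number of edges in an induced subgraph of $\Gnp$: whp, for every $S\subseteq V$,
\[
e(S)\le\binom{|S|}{2}\,p+C_1|S|\sqrt{\dd}
\]
for an explicit constant $C_1$. Since $e(S)\sim\Bin{\binom{|S|}{2}}{p}$ for fixed $S$, this follows from the Bernstein/Chernoff inequality $\Pr(e(S)-\mu\ge t)\le\exp(-t^2/(2\mu+2t/3))$ applied with $t=C_1|S|\sqrt{\dd}$, combined with a union bound over the $\binom{n}{m}$ sets of each size $m=|S|$. The two regimes of the Bernstein exponent (the Gaussian regime contributing $C_1^2 n/2$ and the Poisson regime contributing $3C_1 m\sqrt{\dd}/4$) must together dominate the combinatorial entropy $m\log(en/m)$ across all admissible $m$; pushing $C_1$ down to $(3+2\sqrt{2})/4$ while keeping both regimes alive is precisely what produces the numerical threshold $\dd\ge 16.17$. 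For very small sets, where neither Bernstein regime suffices, the lemma is supplemented by the trivial bound $e(S)\le\binom{|S|}{2}$ and the observation that the joint contribution of such parts to $\sum_i e(S_i)/e(G)$ is $o(1/\sqrt{\dd})$. This is the main technical content and its proof is deferred to Section~\ref{Sec:proof_of_lemma}.

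Summing the lemma over parts and using $e(G)=(1+o(1))\binom{n}{2}p$ (whp, by Chernoff applied to the binomial $e(G)$) yields
\[
\sum_i\frac{e(S_i)}{e(G)}\le\sum_i s_i^2+\frac{3+2\sqrt{2}}{2\sqrt{\dd}}+o\!\left(\frac{1}{\sqrt{\dd}}\right).
\]
For the degree tax I argue analogously: since $\vol(S)=2e(S)+\eSetSetbar$ has variance $|S|\,p(1-p)(n+|S|-2)=O(|S|\dd)$, a Chernoff plus union-bound argument gives $\vol(S)/\vol(G)=s(1+O(1/\sqrt{\dd}))$ uniformly in $S$ of sufficient size, so
\[
\sum_i\frac{\vol(S_i)^2}{\vol(G)^2}\ge\sum_i s_i^2-o\!\left(\frac{1}{\sqrt{\dd}}\right),
\]
with small parts handled as in the previous step. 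Subtracting the two displays gives the claimed bound on $\modA(G)$, valid uniformly in the partition, and hence on $\modu(\Gnp)$.

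The main obstacle is the sharp constant: achieving $(3+2\sqrt{2})/2$ rather than some generous multiple of it requires Bernstein's inequality in its tight form and a careful balance, inside the technical lemma, of the Gaussian exponent $C_1^2 n/2$, the Poisson exponent $3C_1 m\sqrt{\dd}/4$, and the entropy $m\log(en/m)$ across the full range of subset sizes; this balance is exactly what fixes $C_1$ and the numerical threshold on $\dd$. Once the lemma is in hand, the remaining pieces (concentration of $e(G)$ and the degree tax) are routine and contribute only $o(1/\sqrt{\dd})$ to the final bound.
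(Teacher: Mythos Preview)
Your degree-tax estimate is the gap. The claim that $\sum_i \vol(S_i)^2/\vol(G)^2 \ge \sum_i s_i^2 - o(1/\sqrt{\dd})$ uniformly over partitions is false: the adversary may, for instance, place the $s_1 n$ lowest-degree vertices in $S_1$, and in $\Gnp$ this depresses $\vol(S_1)$ below $s_1\vol(G)$ by order $n\sqrt{\dd}$, not $o(n\sqrt{\dd})$. The same Chernoff/union-bound machinery you invoke for $e(S)$ gives only $|\vol(S)-s\,\vol(G)|=O(sn\sqrt{\dd})$ uniformly, so the degree tax can fall short of $\sum_i s_i^2$ by a genuine $\Theta(1/\sqrt{\dd})$. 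Bounding the edge fraction and the degree tax separately therefore yields a constant of the form $2C_1+C'$ rather than $(3+2\sqrt2)/2$, and no choice of $C_1$ from Bernstein alone recovers the stated bound.

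Relatedly, your attribution of the constant is off: $(3+2\sqrt2)/4$ is \emph{not} the Chernoff constant. In the paper the deviation constant produced by Chernoff plus union bound is $C\approx 2$ (this is what the threshold $\dd\ge 16.17$ calibrates), and the factor $(3+2\sqrt2)/4$ arises separately as $\sup_{s\in(0,1)}(1-s)\bigl(1+2\sqrt{s(1-s)}\bigr)$. That optimization only appears because the paper first rewrites each summand of the modularity score via the identity $4e(G)\,e(S)-\vol(S)^2=4\,e(S)\,e(\bar S)-e(S,\bar S)^2$ (Fact~\ref{Fact:mod_as_edges}) and then plugs in simultaneous bounds on $e(S)$, $e(\bar S)$ and $e(S,\bar S)$ (Lemma~\ref{Lem:Fluktuacje}). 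In this representation the leading $s^2(1-s)^2$ terms and the second-order $C^2/\dd$ terms cancel exactly, leaving $C\,s(1-s)(1+2\sqrt{s(1-s)})/\sqrt{\dd}$, whose sum over parts is at most $C\cdot(3+2\sqrt2)/(4\sqrt{\dd})$. Without this algebraic cancellation your two-term decomposition cannot reach the sharp constant.
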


\begin{remark}
	The bound $\dd \ge 16.17$ in the above theorem is the effect of a trade-off between the final value in the upper bound on $\modu(\Gnp)$ and the smallest value of $\dd$ for which the proof is valid. We could as well set the parameters in the proof (i.e., in Lemma~\ref{Lem:Fluktuacje} set the lower bound on $C$ slightly smaller than $2.1$) to get that for any $\dd \ll n$ with high probability 
	$$
	\modu(\Gnp)\le \left(\frac{(3+2\sqrt{2})\, 2.1}{4}\right) \frac{1}{\sqrt{\dd}} \approx \frac{3.06}{\sqrt{\dd}}.
	$$
	On the other hand, if we consider $\dd \to \infty$ as $n \to \infty$, we can get that for any $C>2\sqrt{\ln{2}}$ {\whp}
	$$
		\modu(\Gnp)\le \left(\frac{(3+2\sqrt{2})C}{4}\right) \frac{1}{\sqrt{\dd}}. 
	$$
	Note that $\frac{(3+2\sqrt{2})2\sqrt{\ln{2}}}{4} \approx 2.43$. Further comments on this fact one finds in Section~\ref{Sec:proof_of_lemma} in Remark~\ref{Rem:d_to_infty}.
\end{remark}

%
%
%
%
%

Next, we turn to the lower bound on $\modu(\Gnp)$. Lower bounds on modularity are frequently obtained by constructing a partition $\Part = \{\Set, \Setbar\}$ that minimizes $\eSetSetbar$, and, in many cases, consequently increases the values of $\eSet$ and $\eSetbar$. This technique has been successfully applied in a number of papers; see, for example, \cite{Agdur2023, McDiarmidScerman2018, McDiarmid2020, Prokhorenkova2017}. Using this approach, and leveraging known results about bisections of random graphs by Dembo, Montanari, and Sen~\cite{Dembo2017}, we obtain the following result.

\begin{theorem}\label{Thm:Main2}
	Let $n \in \mathbb{N}$, $\dd\ge 1$, $p = p(n) = \dd/n$, and $G(n,p)$ be a binomial random graph. 
	Then with high probability, for large $\dd$
	\[
	\modu(\Gnp) \ge  \frac{{P_*}+o_{\dd}(1)}{\sqrt{\dd}}, 
	\] 
	where $P_*=0.76321 \pm 0.00003 $ and $o_\dd(\cdot)$ relates to $\dd\to\infty$.  
\end{theorem}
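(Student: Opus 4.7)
The plan is to fix a balanced bipartition $\Part = \{\Set, \Setbar\}$ of $V(\Gnp)$ with $|\Set| = |\Setbar| = n/2$, namely a minimum bisection, and bound $\modA(\Gnp)$ from below using the sharp minimum-bisection estimate of Dembo, Montanari, and Sen~\cite{Dembo2017}. For any such balanced bipartition, setting $e := e(\Gnp)$, $c := \eSetSetbar$, $e_1 := \eSet$, $e_2 := \eSetbar$, the identities $\vol(\Set) = 2e_1 + c$, $\vol(\Setbar) = 2e_2 + c$, $\vol(\Gnp) = 2e$, and $e_1 + e_2 + c = e$ combine with $(2e_1+c)^2 + (2e_2+c)^2 = 2e^2 + 2(e_1-e_2)^2$ to yield the clean formula
\[
\modA(\Gnp) \;=\; \frac{1}{2} - \frac{c}{e} - \frac{(e_1 - e_2)^2}{2 e^2}.
\]
Thus the goal is to choose $\Part$ with small cut $c$ and small imbalance $|e_1-e_2|$.

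The Dembo--Montanari--Sen result~\cite{Dembo2017} asserts that whp the minimum bisection of $\Gnp$ has size at most $\frac{n\dd}{4} - \frac{P_*}{2} n \sqrt{\dd}(1 + o_\dd(1))$, where $P_* \approx 0.76321$. Combined with the routine whp estimate $e(\Gnp) = \frac{n\dd}{2}(1 + o(1))$ (a direct Chernoff bound on $\Bin{\binom{n}{2}}{p}$), taking $\Part$ to be such a min-bisection gives $c/e \le 1/2 - (P_*/\sqrt{\dd})(1 + o_\dd(1))$.

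The main obstacle is controlling the volume-imbalance term $(e_1-e_2)^2/(2e^2)$, because $\Part$ depends on $G$, so a naive concentration bound on $\eSet$ for a single $\Set$ is not enough. I would handle this via a union bound over all $\binom{n}{n/2} \le 2^n$ balanced bipartitions: for fixed $\Set$ of size $n/2$, $\eSet \sim \Bin{\binom{n/2}{2}}{p}$ concentrates around $n\dd/8$, with Chernoff tail $\Pra{|\eSet - n\dd/8| > C n\sqrt{\dd}} \le \exp(-c' C^2 n)$ for suitable absolute constants $c',C$. Choosing $C$ so that $c' C^2 > \log 2$ absorbs the $2^n$ factor, so whp $|e_1-e_2| \le 2Cn\sqrt{\dd}$ for \emph{every} balanced bipartition, in particular for our min-bisection. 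Hence $(e_1-e_2)^2/(2e^2) = O(1/\dd) = o_\dd(1/\sqrt{\dd})$. Assembling the three estimates and using $\modu(\Gnp) \ge \modA(\Gnp)$ yields the claim; note that the large-$\dd$ assumption is exactly what lets the Chernoff tail dominate the $2^n$ union factor.
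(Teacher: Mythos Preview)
Your proof is correct and follows essentially the same approach as the paper: pick a minimum bisection, apply the Dembo--Montanari--Sen estimate for the cut, and control the volume-imbalance term by a Chernoff union bound over all $\binom{n}{n/2}\le 2^n$ balanced bipartitions (this is exactly the paper's Lemma~\ref{Lem:Fluktuacje2}); your identity $\modA=\tfrac12-\tfrac{c}{e}-\tfrac{(e_1-e_2)^2}{2e^2}$ is algebraically equivalent to the paper's use of Fact~\ref{Fact:mod_as_edges}. One minor slip in your closing remark: the Chernoff/union-bound step already succeeds for all $\dd\ge 1$ with a suitable constant $C$; the large-$\dd$ hypothesis is needed only so that the $O(1/\dd)$ imbalance term is $o_{\dd}(1/\sqrt{\dd})$.
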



For completeness let us also comment on the value of $\modu(\Gnp)$ when $\dd \le 1 + o(1)$. 
Then, if only $\dd n \to \infty$, the modularity score of the partition dictated by connected components {\whp} tends to $1$. The proof of this fact may be found in \cite{McDiarmid2020} and is stated formally below.

\begin{theorem} [Theorem 1.1 of \cite{McDiarmid2020}]
	Let $n \in \mathbb{N}$, $p = p(n) \in (0,1)$, and $G(n,p)$ be a binomial random graph. Set $\dd = \dd(n) =np$ and assume that $\dd n \to \infty$ and $\dd \le 1+o(1)$. Then {\whp}
	$$
		\modu(\Gnp) = 1-o(1).
	$$
\end{theorem}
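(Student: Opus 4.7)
The natural strategy is to exhibit a single partition of $V(\Gnp)$ whose modularity score is already $1-o(1)$ whp; since $\modu(G) \in [0,1)$ holds whenever $G$ has an edge, this is enough. The plan is to take for $\Part$ the partition of $V(\Gnp)$ into connected components. For this choice every edge lies inside its class, so $\sum_{\Set \in \Part} e(\Set) = e(G)$ and $\vol(\Set) = 2\,e(\Set)$ for every $\Set \in \Part$, and Definition~\ref{def:modularity} collapses to
\[
\modu(\Gnp) \;\ge\; \modA(\Gnp) \;=\; 1 \;-\; \frac{1}{e(G)^{2}}\sum_{\Set \in \Part} e(\Set)^{2}.
\]
The target becomes $\sum_{\Set} e(\Set)^{2} = o(e(G)^{2})$ whp. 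The elementary inequality $\sum_{\Set} e(\Set)^{2} \le \bigl(\max_{\Set} e(\Set)\bigr)\cdot e(G)$, combined with the Chernoff concentration $e(G) \sim \dd n/2$ whp (which is where the hypothesis $\dd n \to \infty$ enters), reduces the problem to $\max_{\Set} e(\Set) = o(\dd n)$ whp.

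To control $\max_{\Set} e(\Set)$ I would proceed in two steps. First, because $\Gnp$ is subcritical, the expected number of cycles of length $k$ in $\Gnp$ is at most $\dd^{k}/(2k)$, and summing over $k \ge 3$ shows the total cyclomatic number of $\Gnp$ is $O(1)$ whp; hence $e(\Set) \le |\Set| + O(1)$ uniformly over components, so $\max_{\Set} e(\Set) \le (1+o(1))\,L_{1}$, where $L_{1}$ denotes the largest component's vertex size. Second, I would invoke classical Erd\H{o}s--R\'enyi and {\L}uczak bounds on $L_{1}$ in three overlapping sub-regimes of $\dd \le 1+o(1)$: (a) $\dd$ bounded away from $0$ and $1$, giving $L_{1} = O(\log n) = o(\dd n)$; (b) the near-critical window $\dd = 1 \pm O(n^{-1/3})$, giving $L_{1} = O(n^{2/3}) = o(n) = o(\dd n)$; (c) the very sparse regime $\dd = o(1)$, where Cayley's formula yields $\E[\#\{\Set : |\Set|=k,\ \Set \text{ is a tree component}\}] \le (n/\dd)(e\dd)^{k}$, and a union bound then gives $L_{1} = O(\log n/\log(1/\dd))$ whp.

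The main obstacle lies in case (c), where one must verify $L_{1}/(\dd n) \to 0$ uniformly, i.e.\ that $\log n/(\dd n \log(1/\dd)) \to 0$ whenever $\dd n \to \infty$ and $\dd \le 1+o(1)$. Because $\log(1/\dd) = \log n - \log(\dd n)$, this is equivalent to $\dd n\bigl(\log n - \log(\dd n)\bigr) \gg \log n$, and a short case-split on the relative size of $\dd n$ and $\log n$ confirms this from $\dd n \to \infty$ alone (and from $\dd \to 0$, which pushes one out of case (b) back into (a) near the boundary). Once $\max_{\Set} e(\Set) = o(e(G))$ is established in every regime, the displayed inequality gives $\modA(\Gnp) = 1 - o(1)$ whp, which combined with the trivial bound $\modu(\Gnp) < 1$ yields $\modu(\Gnp) = 1 - o(1)$ whp, as claimed.
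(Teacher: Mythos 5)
Your overall strategy -- take the partition into connected components, reduce to $\sum_{\Set\in\Part} e(\Set)^2 = o(e(G)^2)$ via $\sum_\Set e(\Set)^2 \le \max_\Set e(\Set)\cdot e(G)$, and then show $\max_\Set e(\Set) = o(\dd n)$ whp -- is the right one (the paper does not prove this theorem itself but imports it from McDiarmid--Skerman, whose argument is of exactly this component-partition type). However, as written your proof has a genuine gap in the regime $\dd \to 1$, which is part of the allowed range $\dd \le 1+o(1)$ and in fact the delicate part of it. The claim that the total cyclomatic number is $O(1)$ whp is false there: at $\dd=1$ the expected number of cycles is $\sum_k \Theta(\dd^k/k) = \Theta(\log n)$, and for $\dd = 1+\varepsilon_n$ with $n^{-1/3} \ll \varepsilon_n \ll 1$ (e.g.\ $\dd = 1+n^{-1/4}$, which satisfies $\dd\le 1+o(1)$) the largest component has $\Theta(\varepsilon_n n)$ vertices and excess $\Theta(\varepsilon_n^3 n)\to\infty$ whp, so the inequality $e(\Set) \le |\Set| + O(1)$ fails badly. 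Relatedly, your three sub-regimes are not exhaustive: the near-critical regimes $|\dd - 1| = \varepsilon_n$ with $n^{-1/3} \ll \varepsilon_n \ll 1$ (on either side of $1$) fall outside (a), (b) and (c), and on the supercritical side this is precisely where your Step 1 breaks down.

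The gap is repairable, but it needs a different input in that range: instead of ``few cycles plus a bound on $L_1$'', invoke the near-critical estimates ({\L}uczak-type results) giving, for $\dd = 1-\varepsilon_n$ with $\varepsilon_n^3 n \to\infty$, $L_1 = O(\varepsilon_n^{-2}\log(\varepsilon_n^3 n)) = o(n)$ and largest components that are trees or unicyclic whp, and, for $\dd = 1+\varepsilon_n$ with $\varepsilon_n = o(1)$, $e(L_1) = \Theta(\varepsilon_n n) = o(n)$ whp; together with a whp bound such as ``total excess $\le \log^2 n$'' (Markov) where applicable, this yields $\max_\Set e(\Set) = o(n) = o(\dd n)$ throughout $\dd \le 1+o(1)$, and your displayed reduction then finishes the proof. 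Two smaller points: in the critical window one should write $L_1 \le \omega(n)\, n^{2/3}$ whp for some $\omega\to\infty$ rather than $O(n^{2/3})$, and in case (c) you should count trees as subgraphs (a largest component of size $k$ contains a spanning tree on $k$ vertices) rather than tree components, since the largest component need not be a tree; both fixes are routine.
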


\section{Upper bound on the modularity of $\mathbf{G(n,p)}$ -- proof of Theorem \ref{Thm:Main}} \label{Sec:main_upper_bound}

First, it will be useful to observe that, given a graph $G$ and a partition $\Part$ of $V(G)$, the formula for $\modA(G)$ may be rewritten in terms of $\eSet$, $\eSetbar$ and $\eSetSetbar$, where $\Set \in \Part$.

\begin{fact} \label{Fact:mod_as_edges}
	Let $G$ be a graph with at least one edge. Let $\Part$ be a partition of~$V(G)$. Then
	\[
		\modA(G)=\sum_{\Set\in\Part}
		\frac{4 \eSet\eSetbar - \eSetSetbar^2}
		{4e(G)^2}.
	\]
\end{fact}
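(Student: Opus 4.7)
The plan is to start from the second form of $\modA(G)$ already given in Definition~\ref{def:modularity}, namely
\[
\modA(G)=\sum_{\Set\in\Part} \frac{4\eSet \, e(G) - \vol(\Set)^2}{4e(G)^2},
\]
and show that each summand equals $\dfrac{4\eSet\eSetbar - \eSetSetbar^2}{4e(G)^2}$. This is a purely algebraic identity at the level of each individual $\Set\in\Part$; no probabilistic or combinatorial machinery is required.

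The two facts I will use are the standard decomposition of $e(G)$ and the handshake-like identity for $\vol(\Set)$ obtained by double-counting the contribution of each edge. Concretely, since every edge of $G$ is either internal to $\Set$, internal to $\Setbar$, or crosses between $\Set$ and $\Setbar$, I have $e(G)=\eSet+\eSetbar+\eSetSetbar$. Similarly, each edge inside $\Set$ contributes $2$ to $\vol(\Set)$ while each $\Set$--$\Setbar$ edge contributes $1$, giving $\vol(\Set)=2\eSet+\eSetSetbar$.

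Substituting these two identities into the numerator $4\eSet\,e(G)-\vol(\Set)^2$ and expanding yields
\[
4\eSet(\eSet+\eSetbar+\eSetSetbar)-(2\eSet+\eSetSetbar)^2
=4\eSet^2+4\eSet\eSetbar+4\eSet\eSetSetbar-4\eSet^2-4\eSet\eSetSetbar-\eSetSetbar^2,
\]
which collapses to $4\eSet\eSetbar-\eSetSetbar^2$. Summing over $\Set\in\Part$ gives the claimed formula.

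I do not anticipate any real obstacle here, since the only subtlety is that $\Setbar$ denotes the set-theoretic complement of $\Set$ in $V(G)$ (so $\eSetbar$ includes edges within and between all parts other than $\Set$), and not the ``other part'' of a bipartition; this matters only for the interpretation of $\eSetbar$ but not for the identity $e(G)=\eSet+\eSetbar+\eSetSetbar$, which continues to hold with this convention. Hence the computation is valid for every partition $\Part$, not only for bipartitions.
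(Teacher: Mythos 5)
Your proof is correct and follows essentially the same route as the paper: both use the identities $e(G)=\eSet+\eSetbar+\eSetSetbar$ and $\vol(\Set)=2\eSet+\eSetSetbar$, then expand $4\eSet\,e(G)-\vol(\Set)^2$ and cancel to obtain $4\eSet\eSetbar-\eSetSetbar^2$. Your closing remark about $\Setbar$ being the full complement (not merely another part) is a useful clarification that the paper leaves implicit.
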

\begin{proof}
	We have
	\begin{align*}
		\vol(\Set)&=2\eSet + \eSetSetbar \quad \quad \quad \quad \text{ and }\\
		e(G)&=\eSet + \eSetSetbar + \eSetbar,
	\end{align*}
	therefore
	\begin{align*}
		4e(G)\eSet &= 4\eSet^2 + 4\eSet\eSetSetbar + 4 \eSet\eSetbar \quad \text{ and }\\
		\vol(\Set)^2 &= 4\eSet^2 + 4\eSet\eSetSetbar +  \eSetSetbar^2.
	\end{align*}
	Consequently
	\begin{equation*}
		4e(G)\eSet-\vol(\Set)^2= 4 \eSet\eSetbar - \eSetSetbar^2
	\end{equation*}
	which substituted to formula \eqref{Eq:DefModularity1} from Definition \ref{def:modularity} gives
	\begin{equation}\label{Eq:DefModularity2}
		\modA(G)=
		\sum_{\Set\in\Part}
		\frac{4 \eSet\eSetbar - \eSetSetbar^2}
		{4e(G)^2}.
	\end{equation}	
\end{proof}

Next, note that for $\Set \subseteq V(\Gnp)$ with $|\Set|=\set n$ we have $\eSet\sim\Bin{\binom{\set n}{2}}{p}$, $\eSetbar\sim \Bin{\binom{(1-\set) n}{2}}{p}$, and $\eSetSetbar\sim \Bin{\set(1-\set)n^2}{p}$, thus by $\dd=np$ we get $\E[\eSet] = \frac{\set^2\dd n}{2} - \frac{\set \dd}{2}$, $\E[\eSetbar] = \frac{(1-\set)^2\dd n}{2} - \frac{(1-\set) \dd}{2}$, and $\E[\eSetSetbar] = \set(1-\set)n\dd$. Therefore the following bounds on $\eSet$, $\eSetbar$, and $\eSetSetbar$ may be derived mainly by Chernoff's inequality.


\begin{lemma}\label{Lem:Fluktuacje}
	Let $n \in \mathbb{N}$, $p = p(n) \in [0,1]$ and $G(n,p)$ be a binomial random graph. Let also $C \geq 1.999$. Set $\dd = \dd(n) =np$, and let $\dd \geq 16.17$. 
	Then we get that {\whp} for all $\Set\subseteq V(\Gnp)$, with $\set = \set(n) =|\Set|/ n$, 
	\begin{align}
		\label{Eq:Fluktuacje1}
		\eSet &\le \set\left(\set + C \dd^{-1/2}\right)  \frac{n\dd}{2} \quad \quad \quad \quad \quad \quad \quad \quad \text{ and } \\
		\label{Eq:Fluktuacje2}
		\eSetbar &\le (1-\set)\left((1-\set) + C \dd^{-1/2}\right)  \frac{n\dd}{2} \quad \quad \quad \text{ and } \\
		\label{Eq:Fluktuacje3}
		\eSetSetbar &\ge \left(\set(1-\set) -  C \sqrt{\set(1-\set)} \dd^{-1/2}\right) n\dd.
	\end{align}
\end{lemma}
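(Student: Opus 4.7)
The plan is to prove each of the three inequalities by applying Chernoff's inequality to the binomially distributed random variables $\eSet$, $\eSetbar$, $\eSetSetbar$ for a \emph{fixed} subset $\Set \subseteq V(\Gnp)$, and then taking a union bound over all subsets. The constants $C$ and $\dd$ must be chosen large enough that the per-set failure probability beats the entropy factor $\binom{n}{\set n} \le e^{n H(\set)}$ counting the sets of size $\set n$, where $H(x)=-x\ln x-(1-x)\ln(1-x)$.

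First I would fix $\set = k/n$ and focus on \eqref{Eq:Fluktuacje1}. Since $\eSet \sim \Bin{\binom{k}{2}}{p}$ has mean $\mu_{\Set} = \set^2 n\dd/2 - \set \dd/2$, the claimed inequality rewrites as $\eSet - \mu_{\Set} \le C\set n\sqrt{\dd}/2 + \set \dd/2$. I would apply the upper-tail Chernoff bound $\Pra{X \ge \mu + t} \le \exp(-t^2/(2\mu + 2t/3))$ with $t = C\set n\sqrt{\dd}/2 + \set \dd/2$; after simplification the exponent is of order $C^2 n/(4(1 + C/(3\set\sqrt{\dd})))$, with harmless corrections controlled by $\dd \ge 16.17$. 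Inequality \eqref{Eq:Fluktuacje2} follows by the same argument with $\Set$ replaced by $\Setbar$, and for \eqref{Eq:Fluktuacje3} I would use the sharper lower-tail bound $\Pra{X \le \mu - t} \le \exp(-t^2/(2\mu))$ applied to $\eSetSetbar \sim \Bin{\set(1-\set)n^2}{p}$ with $t = C\sqrt{\set(1-\set)}\, n\sqrt{\dd}$, yielding a probability of order $\exp(-C^2 n/2)$ per set.

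For the union bound I would sum over the at most $n+1$ choices of $k = \set n$ and, for each, over the $\binom{n}{k}$ subsets of that size, giving a total failure probability bounded by $(n+1)\exp(nH(\set) - C^2 n/(4(1+C/(3\set\sqrt{\dd}))))$. This is $o(1)$ provided the inequality $C^2/(4(1+C/(3\set\sqrt{\dd}))) > H(\set)$ holds uniformly in $\set \in [0,1]$. In the asymptotic regime $\dd \to \infty$ this reduces to $C > 2\sqrt{\ln 2}$, in agreement with the remark following Theorem~\ref{Thm:Main}; the stated thresholds $C \ge 1.999$ and $\dd \ge 16.17$ are chosen precisely so that the inequality survives the finite-$\dd$ correction uniformly in $\set$.

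The main obstacle I expect is exactly this uniform verification, especially in the extreme regime $\set \to 0$ (and, by symmetry, $\set \to 1$), where $H(\set)$ is not negligible while the $C/(3\set\sqrt{\dd})$ term inflates the Chernoff denominator. Very small $\set$ with $\set n \lesssim \sqrt{\dd}$ can be handled separately by the deterministic estimate $\eSet \le \binom{\set n}{2}$, which already falls below the claimed right-hand side once $\set\sqrt{\dd} \lesssim 1$; the remaining range is a one-variable inequality that can be checked by elementary calculus, and it is precisely this check that fixes the numerical constants $1.999$ and $16.17$ appearing in the statement.
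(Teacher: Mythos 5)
Your overall plan---Chernoff for a fixed set plus a union bound over sets, with constants chosen to beat the entropy---is the same high-level strategy as the paper's, and you correctly identify the small-$\set$ regime as the place where the argument is delicate. But the specific form of Chernoff you propose to use is not sharp enough there, and your two-regime split leaves a genuine gap.

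You say you would use the bound $\Pra{X\ge\mu+t}\le\exp\bigl(-t^2/(2(\mu+t/3))\bigr)$. For a set of size $k$ with $C\sqrt{\dd} < k \ll n/\dd$ (so $\set\sqrt{\dd}=k\sqrt{\dd}/n\to 0$), one has $\mu\approx\set^2n\dd/2$ and $t\approx C\set n\sqrt{\dd}/2$, and this quadratic bound gives exponent
\[
\frac{t^2}{2\mu+2t/3}
=\frac{C^2\set n\sqrt{\dd}/4}{\set\sqrt{\dd}+C/3}
\approx\frac{3Ck\sqrt{\dd}}{4},
\]
a quantity that does not grow with $n$. Meanwhile the union bound must pay $\log\binom{n}{k}\sim k\ln n$, which does grow with $n$. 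So the inequality $C^2/\bigl(4(1+C/(3\set\sqrt{\dd}))\bigr)>H(\set)$ that you want to verify is simply false for small $\set$, regardless of the constants. Your deterministic estimate $\eSet\le\binom{k}{2}$ only covers $k\le C\sqrt{\dd}+1$ (it is not tied to $\set\sqrt{\dd}\lesssim 1$ as you wrote, which would be $k\lesssim n/\sqrt{\dd}$); the range $C\sqrt{\dd}< k\lesssim n/\dd$ is left uncovered by either half of your proposal. This is the real obstacle, and it is not a constant-tuning issue.

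The paper avoids it in two ways that you would need to import. First, in the Chernoff regime it uses the sharper relative-entropy form $\Pra{X\ge\mu+t}\le\exp(-\mu\,\varphi(t/\mu))$ with $\varphi(y)=(1+y)\ln(1+y)-y$; for $t/\mu=C/(\set\sqrt{\dd})\to\infty$ this gives exponent $\sim\frac{Ck\sqrt{\dd}}{2}\ln\frac{C}{\set\sqrt{\dd}}\sim\frac{Ck\sqrt{\dd}}{2}\ln n$, which does beat $k\ln n$ when $C\sqrt{\dd}/2>1$; this is exactly what the function $f$ of Lemma~\ref{Lemma_f(x,d)} encodes. Second, even the $\varphi$-form carries an annoying $(1+o(1))$ from replacing the exact mean $\binom{k}{2}p$ by $\set^2n\dd/2$, which is not uniform for $k$ of bounded size; the paper therefore introduces a separate Case $1\le k\le\delta n$ (with $1/n\ll\delta\ll\dd^{-1}$) where it bypasses Chernoff entirely and bounds the binomial tail directly by $\binom{N}{t}p^t$. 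So the paper's proof is a three-case split (crude tail bound, then $\varphi$-Chernoff with $(e/\set)^{\set n}$, then $\varphi$-Chernoff with $2^n$), not the two-regime plan you describe. Your treatment of \eqref{Eq:Fluktuacje3} is fine and matches the paper.
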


Now we show how Lemma \ref{Lem:Fluktuacje} implies Theorem \ref{Thm:Main}. The proof of Lemma \ref{Lem:Fluktuacje} one finds in Section \ref{Sec:proof_of_lemma}.

\begin{proof}[Proof of Theorem \ref{Thm:Main}]
	Set $C = 1.999$. By Lemma \ref{Lem:Fluktuacje} we get that {\whp} for all $\Set\subseteq V(\Gnp)$, where $\set = \set(n) =|\Set|/ n$,
	\begin{align} \label{Eq:inner_concentr}
		\frac{4 \eSet\eSetbar - \eSetSetbar^2}
		{(n\dd)^2}
		&\le 
		\set(1-\set)\left(\set + C  \dd^{-1/2}\right)\left((1-\set) + C  \dd^{-1/2}\right) \nonumber \\
		&\quad-\left(\set(1-\set) -  C \sqrt{\set(1-\set)} \dd^{-1/2}\right)^2 \nonumber \\
		&=\set^2(1-\set)^2+C\set(1-\set)\dd^{-1/2}+C^2\set(1-\set)\dd^{-1} \nonumber \\
		&\quad-\set^2(1-\set)^2+2C(\set(1-\set))^{3/2}\dd^{-1/2}-C^2\set(1-\set)\dd^{-1}\\
		&=C \set (1-\set) \left(1+2(\set(1-\set))^{1/2}\right)\dd^{-1/2} \nonumber\\
		&\le \set\frac{(3+2\sqrt{2})}{4} C  \dd^{-1/2}. \nonumber
	\end{align}
	Here the last line follows by finding a supremum of the function $f(\set)=(1-\set)\left(1+2(\set(1-\set))^{1/2}\right)$ on the interval $(0,1)$.
	
	Note that $e(\Gnp) \sim \Bin{{n \choose 2}}{p}$, $\E[e(\Gnp)] = \frac{n \dd}{2}-\frac{\dd}{2}$ thus, by the concentration of a binomial random variable, we may write that {\whp} $e(\Gnp) = \left(1+o(1)\right)\frac{n\dd}{2}$. Let $\Part$ be any vertex partition of $V(\Gnp)$. Recall that $C=1.999$. By \eqref{Eq:inner_concentr} 
	we get that {\whp}
	\begin{align*}
		\modA (\Gnp)&= 
		\sum_{\Set\in\Part}
		\frac
		{4 \eSet\eSetbar - \eSetSetbar^2}
		{4e(\Gnp)^2}\\
		&\le (1+o(1)) \frac{(3+2\sqrt{2})1.999}{4}  \dd^{-1/2} \sum_{\Set\in\Part}\frac{|\Set|}{n}\\
		&=(1+o(1)) \frac{(3+2\sqrt{2})1.999}{4} \dd^{-1/2}.	
	\end{align*}
	Since the above formula is valid for any vertex partition $\Part$ of $V(\Gnp)$, it is in particular true for a vertex partition $\Part_{opt}$ realizing the modularity of $\Gnp$. Thus {\whp} 
	\[
		\modu(\Gnp) = \modu_{\Part_{opt}}(\Gnp) \le  \frac{(3+2\sqrt{2})}{2} \frac{1}{\sqrt{\dd}}. 
	\]
\end{proof}

\section{Bounds on $\mathbf{\eSet}$, $\mathbf{\eSetbar}$, and $\mathbf{\eSetSetbar}$ -- proof of Lemma \ref{Lem:Fluktuacje}} \label{Sec:proof_of_lemma}

In this section we derive the bounds on $\eSet$, $\eSetbar$, and $\eSetSetbar$ presented in Lemma~\ref{Lem:Fluktuacje}. The proof is based on quite basic tools, 
mainly on Chernoff's inequality. We state it as it appears in Theorem 2.1 of \cite{JLRBook} by Janson, {\L}uczak, and Ruci{\'n}ski.


\begin{lemma}[Chernoff's inequality] \label{Lemma:Chernoff}
Assume that $X$ follows a binomial distribution with the expected value $\mu$. Let also $\varphi(y) = (1+y) \ln{(1+y)-y}$ for $y \ge 0$. Then for any  $t\ge 0$
\begin{align}
	\Pra{X\ge \mu + t}&\le \exp\left(-\mu \, \varphi\left(\frac{t}{\mu}\right)\right)\le \exp\left(-\frac{t^2}{2(\mu+t/3)}\right) \quad \text{ and } \label{Eq:Chernoff_upper_tail}\\ 
	\Pra{X\le \mu - t}&\le \exp\left(-\frac{t^2}{2\mu}\right). \label{Eq:Chernoff_lower_tail}
\end{align}
\end{lemma}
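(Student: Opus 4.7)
My plan is to prove each of \eqref{Eq:Fluktuacje1}--\eqref{Eq:Fluktuacje3} separately by fixing a set $\Set$ of prescribed cardinality $k = \set n$, applying the appropriate tail bound from Chernoff's inequality (Lemma~\ref{Lemma:Chernoff}) to the relevant binomial variable, and then taking union bounds over the $\binom{n}{k}$ subsets of size $k$ and over $k \in \{0, 1, \ldots, n\}$; the three events are combined by a final union bound. The setup is the one already noted in the text: for fixed $\Set$, the variables $\eSet \sim \Bin{\binom{k}{2}}{p}$, $\eSetbar \sim \Bin{\binom{n-k}{2}}{p}$ and $\eSetSetbar \sim \Bin{k(n-k)}{p}$ have explicit expectations. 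By the symmetry $\Set \leftrightarrow \Setbar$, it suffices to treat \eqref{Eq:Fluktuacje1} and \eqref{Eq:Fluktuacje3} in detail.

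\textbf{Chernoff estimates.} A short subtraction shows that the ``gaps'' between the target right-hand sides and the expectations are
\[
t_1 = \tfrac{k}{2}\bigl(C\sqrt{\dd} + \tfrac{\dd}{n}\bigr), \qquad t_3 = C\, n\sqrt{\set(1-\set)\dd}.
\]
For \eqref{Eq:Fluktuacje3} the lower tail \eqref{Eq:Chernoff_lower_tail} yields, for each fixed $\Set$, $\Pr(\eSetSetbar \le \E[\eSetSetbar] - t_3) \le \exp(-C^2 n/2)$. Since $C \ge 1.999$ forces $C^2/2 > \ln 2$, this beats $\binom{n}{k} \le \exp(n \ln 2)$ with room to spare, and summation over $k$ costs only a polynomial factor. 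For \eqref{Eq:Fluktuacje1} I would apply the sharp upper-tail form $\Pr(\eSet \ge \mu_1 + t_1) \le \exp(-\mu_1 \varphi(y))$ with $\mu_1 = \binom{k}{2}p$ and $y = t_1/\mu_1$, and show that $\mu_1 \varphi(y)/n$ exceeds $H(\set) = -\set\ln\set-(1-\set)\ln(1-\set)$ uniformly in $\set$, with enough slack to absorb the summation over $k$.

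\textbf{Main obstacle.} The delicate point is the numerical trade-off that forces the specific constants $C \ge 1.999$ and $\dd \ge 16.17$. Setting $u = C/(\set\sqrt{\dd})$, one finds $\mu_1 \varphi(y)/n = (C^2/2)\cdot \varphi(u)/u^2$; since $\varphi(u)/u^2$ is monotonically decreasing in $u$, the worst case is $\set = 1/2$ (where $H$ is maximal), giving $u \approx 2C/\sqrt{\dd}$. At $(\dd,C)=(16.17,1.999)$ one checks $u \approx 0.994$, $\varphi(u) \approx 0.382$ and $\mu_1\varphi(y)/n \approx 0.77 > 0.69 \approx \ln 2$, which is the genuinely tight part of the argument and explains both numerical thresholds in the hypothesis; monotonicity in $\dd$ then extends this to all $\dd \ge 16.17$. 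For $\set$ away from $1/2$ one only needs that the decay of $(C^2/2)\varphi(u)/u^2$ in $u$ does not outrun the decay of $H(\set)$; using $\varphi(u) \sim u\ln u$ and $H(\set) \sim \set\ln(1/\set)$ in the small-$\set$ regime reduces this to the easy inequality $C\sqrt{\dd}/2 > 1$. The edge cases $k \in \{0, n\}$ reduce to standard concentration of $e(\Gnp)$.
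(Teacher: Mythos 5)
Your proposal does not prove the statement it was supposed to prove. The statement in question is Lemma~\ref{Lemma:Chernoff} itself: the tail bounds $\Pra{X\ge \mu+t}\le \exp\left(-\mu\,\varphi\left(t/\mu\right)\right)\le \exp\left(-t^2/(2(\mu+t/3))\right)$ and $\Pra{X\le \mu-t}\le \exp\left(-t^2/(2\mu)\right)$ for a binomial random variable $X$ with mean $\mu$. What you have written is instead a proof sketch of Lemma~\ref{Lem:Fluktuacje} (the whp bounds \eqref{Eq:Fluktuacje1}--\eqref{Eq:Fluktuacje3} on $\eSet$, $\eSetbar$, $\eSetSetbar$ in $\Gnp$), which \emph{uses} Chernoff's inequality as an input. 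Nowhere do you establish the inequalities of the lemma itself, so as a proof of the stated result the proposal is entirely off target; in the paper this lemma is not re-proved but quoted from Theorem~2.1 of the book by Janson, {\L}uczak and Ruci\'nski.

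A correct proof of the actual statement would run along standard lines: write $X$ as a sum of independent Bernoulli variables, bound the moment generating function $\E e^{\lambda X}\le \exp\left(\mu(e^{\lambda}-1)\right)$, apply Markov's inequality to $e^{\lambda X}$ and optimize over $\lambda>0$ to get $\Pra{X\ge\mu+t}\le\exp\left(-\mu\,\varphi\left(t/\mu\right)\right)$; then verify the elementary inequality $\varphi(y)\ge \frac{y^2}{2(1+y/3)}$ for $y\ge 0$ (e.g.\ by comparing derivatives) to obtain the second form of \eqref{Eq:Chernoff_upper_tail}; and for \eqref{Eq:Chernoff_lower_tail} repeat the argument with $\lambda<0$ and use $\varphi(-y)\ge y^2/2$ for $0\le y\le 1$. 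None of these steps appears in your text. (As an aside, even read as a sketch of Lemma~\ref{Lem:Fluktuacje}, your treatment of the small-$\set$ range glosses over the regime where $\E[\eSet]$ is tiny compared with the entropy term, which is why the paper handles $k\le\delta n$ by a direct counting estimate rather than by Chernoff plus union bound; but the primary defect is that you proved the wrong lemma.)
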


The following technical lemma (its proof may be found in the Appendix) will be applied in the proof of Lemma \ref{Lem:Fluktuacje}.

\begin{lemma} \label{Lemma_f(x,d)}
	For $x,y,z>0$ define 
	$$
		f(x,y,z) = \frac{xy}{2} \varphi\left( \frac{z}{x} \right) - \left(\ln{\frac{y}{x}}+1\right)
		\quad
		\text{ and } \quad
		g(x,z)=\frac{x^2}{2}\varphi\left( \frac{z}{x} \right),
	$$
	where $\varphi:[0,\infty] \rightarrow \mathbb{R}$ is as in Lemma \ref{Lemma:Chernoff}. Then for all $0<x\le y/3$ 
	\[
		f(x,y,z) > 0.001 \quad \text{ for } \quad z \ge 1.999, y \ge 3.95
	\]
	and 
	\[
		g(x,z)>\ln 2 + 0.01 \quad \text{ for } \quad z \ge 1.999, x \ge 1.34.
	\]
\end{lemma}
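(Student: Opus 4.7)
My plan for both bounds is to show that each of the functions $f$ and $g$ is monotone in the relevant variables, so that verifying the claimed lower bounds reduces to a numerical check at a single corner of the feasible region. Since $\varphi(u)=(1+u)\ln(1+u)-u$ has derivative $\varphi'(u)=\ln(1+u)\ge 0$, both $f$ and $g$ are non-decreasing in $z$, so it suffices throughout to take $z=1.999$.

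For $g(x,z)=\tfrac{x^2}{2}\varphi(z/x)$, a direct calculation gives
\[
\frac{\partial g}{\partial x}=\frac{2x+z}{2}\ln\!\left(1+\frac{z}{x}\right)-z,
\]
and the elementary inequality $\ln(1+t)\ge 2t/(2+t)$ for $t\ge 0$ (proved in one line by differentiating $\ln(1+t)-2t/(2+t)$, whose derivative equals $t^{2}/[(1+t)(2+t)^{2}]\ge 0$) shows this is nonnegative. Hence $g$ is also non-decreasing in $x$, so its minimum on $\{x\ge 1.34,\ z\ge 1.999\}$ is attained at the corner $(1.34,1.999)$, and one finishes by evaluating $g(1.34,1.999)$ with enough precision to confirm it exceeds $\ln 2+0.01$.

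For $f$, the key simplification comes from the substitution $w=y/x\ge 3$, which rewrites
\[
f(x,wx,z)=w\,g(x,z)-\ln w-1.
\]
Since $g$ is non-decreasing in $x$, for fixed $w$ and $z$ the minimum of $f$ over the constraint $y=wx\ge 3.95$ is attained at the smallest admissible $x=3.95/w$, i.e.\ at $y=3.95$. So it suffices to show $h(x):=f(x,3.95,1.999)>0.001$ for $x\in(0,3.95/3]$. Differentiating gives
\[
h'(x)=\frac{3.95}{2}\left[\ln\!\left(1+\frac{1.999}{x}\right)-\frac{1.999}{x}\right]+\frac{1}{x},
\]
and the quadratic tail refinement $\ln(1+t)-t\le -t^{2}/(2(1+t))$ (by the same one-line derivative argument) yields
\[
h'(x)\le \frac{1}{x}\left(1-\frac{3.95\cdot 1.999^{2}}{4(x+1.999)}\right),
\]
which is non-positive whenever $x+1.999\le 3.95\cdot 1.999^{2}/4\approx 3.946$, i.e.\ for all $x\le 1.947$; this covers the interval in question, since $3.95/3\approx 1.317$. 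Thus $h$ is decreasing on $(0,3.95/3]$, and $\min h=h(3.95/3)$; one evaluates this value numerically and checks it exceeds $0.001$.

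The main obstacle is purely quantitative: both boundary values sit just barely above the claimed thresholds. Rough computation gives $g(1.34,1.999)\approx 0.70319$ versus $\ln 2+0.01\approx 0.70315$, and $h(3.95/3)\approx 0.00125$ versus the required $0.001$. Both gaps are of order a few parts in $10^{4}$, so the final arithmetic has to be carried out (or symbolically verified) with enough precision that the strict inequalities are truly rigorous rather than numerical near-misses; this is the one place where care is really needed.
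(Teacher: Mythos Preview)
Your proof is correct and follows essentially the same strategy as the paper: establish that $g$ is increasing in $x$ and $z$ (using the same inequality $\ln(1+t)\ge 2t/(2+t)$), show that $f$ is decreasing in $x$ on the relevant range, and thereby reduce both claims to a numerical check at the single corner $(x,y,z)=(3.95/3,\,3.95,\,1.999)$. Your substitution $w=y/x$, yielding $f=w\,g(x,z)-\ln w-1$, is a mild streamlining of the paper's order of reductions (the paper first proves $\bar f(x)=f(x,y,z)$ is decreasing for every fixed $y\ge 3.95$, then evaluates at $x=y/3$ and invokes monotonicity of $g$), but both routes land on exactly the same boundary evaluation and the same tight numerical margins you flag.
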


\begin{proof} [Proof of Lemma \ref{Lem:Fluktuacje}]
	For $\Set \subseteq V(\Gnp)$ set $\set = \set(n) = |\Set|/ n$ and define the following events
	\begin{align*}
		\BB_{\Set} &= \left\{ \exists \Set \subseteq V(\Gnp)  \quad \eSet > \set\left(\set + C \dd^{-1/2}\right)  \frac{n\dd}{2}\right\}, \\
		\BB_{\Setbar} &= \left\{ \exists \Set \subseteq V(\Gnp)  \quad \eSetbar > (1-\set)\left((1-\set) + C \dd^{-1/2}\right)  \frac{n\dd}{2} \right\}, \\
		\BB_{\Set\Setbar} &= \left\{ \exists \Set \subseteq V(\Gnp)  \quad \eSetSetbar < \left(\set(1-\set) -  C \sqrt{\set(1-\set)} \dd^{-1/2}\right) n\dd \right\}.
	\end{align*}
	Note that our aim is to show that $\Pra{\BB_{\Set}} = o(1)$, $\Pra{\BB_{\Setbar}} = o(1)$, and $\Pra{\BB_{\Set\Setbar}} = o(1)$.
	
	We start with analyzing $\Pra{\BB_{\Set}}$. For $k \in [n]$ define the events
	\[
		\BB_{k} = \left\{ \exists \Set \subseteq V(\Gnp) \quad |\Set|=k \, \wedge \, \eSet > \set\left(\set + C \dd^{-1/2}\right)  \frac{n\dd}{2}\right\}.
	\]
	Obviously
	\begin{equation} \label{eq:prob_sum}
		\Pra{\BB_{\Set}} = \Pra{\BB_{1} \cup \BB_{2} \cup \ldots \cup \BB_n} \le \sum_{k=1}^{n} \Pra{\BB_{k}}.
	\end{equation}
	
	The methods for upper bounding the parts of the above sum depend on the size of a set~$\Set$, thus on the particular range of $k$. Therefore the next part of the proof will be split into three cases corresponding to three different ranges of $k$. To define those ranges, first choose any function $\delta = \delta(n)$ satisfying $1/n \ll \delta \ll \dd^{-1}$ (note that such $\delta$ exists by the assumption $\dd\ll n$ from the statement of the theorem). For $s \in [0,1]$ let also  $X_s \sim \Bin{\binom{s n}{2}}{p}$.
	\begin{enumerate}
		\item {\bf Case $\mathbf{1 \le k \le \delta n}$.}\\
		In this case we consider sets $\Set \subseteq V(\Gnp)$ such that $|\Set| = \set n$ for $0< \set = \set(n) \le \delta$. Recall that $e(\Set) \sim \Bin{\binom{\set n}{2}}{p}$. 
		Therefore by the union bound we get
		\begin{align*}
			\Pra{\BB_{\set n}}
			&\le\binom{n}{\set n} \Pra{X_\set > \frac{\set^2\dd n}{2} +  \frac{C \set\dd^{1/2} n}{2}}\\
			&\le \left(\frac{en}{\set n}\right)^{\set n} \Pra{X_\set >  \frac{C \set\dd^{1/2} n}{2}}\\
			&\le \left(\frac{e}{\set }\right)^{\set n} \binom{\binom{\set n}{2}}{\frac{C}{2}\set \dd^{1/2}n}p^{\frac{C}{2}\set \dd^{1/2}n}\\
			&\le
			\left(\frac{e}{\set}\right)^{\set n}\left(\frac{e\set^2\dd n}{C\set \dd^{1/2}n}\right)^{\frac{C}{2}\set\dd^{1/2}n}\\
			&=
			\left(
			\frac
			{e^{1+\frac{C}{2}\dd^{1/2}}}
			{C^{\frac{C}{2}\dd^{1/2}}}
			\set^{\frac{C}{2}\dd^{1/2}-1}\dd^{\frac{C}{4}\dd^{1/2}}
			\right)^{\set n}\\
			&\le
			\left(
			e\left(\frac{e}{C}\right)^{\frac{C}{2}\dd^{1/2}}
			\delta^{\frac{C\dd^{1/2}}{4}-1}
			\right)^{\set n}\\
			&=
			\left(
			\frac{e^3}{C^2}\left(\frac{e^2}{C^2}\delta\right)^{\frac{C}{4}\dd^{1/2}-1}
			\right)^{\set n},
		\end{align*}
		where the last inequality follows by the fact that $\set \le \delta$, $\delta \le \dd^{-1}$ (i.e., $\dd\le \delta^{-1}$), and $C \ge 1.999$ and $\dd \ge 16.17$ which implies $\frac{C}{2}\dd^{1/2}-1>0$. Now set $\qq = \qq(n) = \frac{e^3}{C^2}\left(\frac{e^2}{C^2}\delta\right)^{\frac{C}{4}\dd^{1/2}-1}$. Note that $C \ge 1.999$ and $\dd \ge 16.17$ implies $\frac{C}{4}\dd^{1/2}-1>0$, and recall that $\delta = o(1)$, thus we have $\qq=o(1)$ and
		\begin{equation} \label{eq:sum_part1}
			\sum_{k=1}^{\delta n}\Pra{\BB_k}\le\sum_{k=1}^{\delta n}\qq^{k}=o(1).
		\end{equation}

		\item {\bf Case $\mathbf{\delta n\le k \le n/3}$.} \\
		In this case we consider sets $\Set \subseteq V(\Gnp)$ such that $|\Set| = \set n$ for $\delta \le \set = \set(n) \le 1/3$. We have
		
		\begin{align} \label{Eq:bin_standard}
			\binom{n}{\set n}
			&\le \left(\frac{e n}{\set n}\right)^{\set n} = \exp\left(\set n \left(\ln{\frac{1}{\set}}+1\right)\right).
		\end{align}

		Note that $\E X_\set = \frac{\set^2\dd n}{2}-\frac{\set\dd}{2}=(1+o(1))\frac{\set^2\dd n}{2}$. 
		Let the function $\varphi:[0,\infty) \rightarrow \mathbb{R}$ be defined as in Lemma~\ref{Lemma:Chernoff} (thus it is increasing). By \eqref{Eq:bin_standard}, Chernoff's inequality (see \eqref{Eq:Chernoff_upper_tail} in Lemma~\ref{Lemma:Chernoff}) and the union bound we get 
		\begin{align}
			\Pra{\BB_{\set n}}&\le\binom{n}{\set n} \Pra{X_\set > \frac{\set^2\dd n}{2} +  \frac{C \set\dd^{1/2} n}{2}
			} \nonumber\\
			&\le\binom{n}{\set n} \Pra{X_\set > \E X_\set +  \frac{C \set\dd^{1/2} n}{2}} \nonumber\\
			&\le \exp\left(\set n \left(\ln{\frac{1}{\set}}+1\right)\right) \exp\left(-(1+o(1)) \frac{\set^2\dd n}{2} \varphi\left( \frac{C}{s\dd^{1/2}} \right)  \right) \nonumber\\
			&\le \exp\left( - (1+o(1)) \set n \left(  \frac{\set\dd }{2} \varphi\left( \frac{C}{s\dd^{1/2}} \right) - \left(\ln{\frac{1}{\set}}+1\right) \right) \right)\\
			& = \exp\left( - (1+o(1)) \set n f(\set \dd^{1/2},\dd^{1/2},C) \right)
		\end{align}
		for the function $f$ defined as in Lemma \ref{Lemma_f(x,d)}. By Lemma \ref{Lemma_f(x,d)}, for $\set \leq 1/3$, $C \geq 1.999$, $\dd \ge 16.17$ (thus $\dd^{1/2} \ge 3.95$)
		we have that $f(\set \dd^{1/2},\dd^{1/2},C) > 0.001$. Since $\delta n\to \infty$ as $n\to\infty$, we obtain
		(recall that $\set = k/n$)
		\begin{equation} \label{eq:sum_part3}
			\sum_{k=\delta n}^{n/3}\Pra{\BB_k} \le\sum_{k=\delta n}^{n/3}\left(e^{- 0.001}\right)^k =o(1).
		\end{equation}

		\item {\bf Case $\mathbf{n/3\le k \le n}$.}\\
		In this case we consider sets $\Set \subseteq V(\Gnp)$ such that $|\Set| = \set n$ for $1/3 \le \set = \set(n) \le 1$. 
		Moreover $\E X_\set = \frac{\set^2\dd n}{2}-\frac{\set\dd}{2}=(1+o(1))\frac{\set^2\dd n}{2}$. Therefore by Chernoff's inequality (see \eqref{Eq:Chernoff_upper_tail} in Lemma~\ref{Lemma:Chernoff}) for any $\Set$ from this case we may write (recall that $e(\Set) \sim \Bin{\binom{\set n}{2}}{p}$) 
		
		\begin{align*}
			\Pra{e(\Set) > \frac{\set^2\dd n}{2} +  \frac{C \set\dd^{1/2} n}{2}} &= 
			\Pra{X_\set > \frac{\set^2\dd n}{2} +  \frac{C \set\dd^{1/2} n}{2}}\\
			&\le \Pra{X_\set > \E X_\set + \frac{C\set\dd^{1/2} n}{2}}\\
			&\le \exp\left(-(1+o(1)\frac{\set^2\dd n}{2}\varphi\left(\frac{C}{\set\dd^{1/2}}\right)\right)\\
			&= \exp\left(-(1+o(1))g(\set\dd^{1/2},C)n\right)\\
			&=o(2^{-n}), 
		\end{align*}
		where $g$ is defined as in Lemma~\ref{Lemma_f(x,d)}. By Lemma~\ref{Lemma_f(x,d)}, for $C \ge 1.999$, $\set \geq 1/3$, and $\dd \ge 16.17$ (i.e., $\set\dd^{1/2} \ge 1.34$) we get $g(\set\dd^{1/2},C) > \ln{2} + 0.01$, which implies the last equality. There are at most $2^n$ subsets of $V(\Gnp)$ to consider in this case (and in general) thus by the union bound we get
		\begin{equation} \label{eq:sum_part4}
			\sum_{k=n/3}^{n}\Pra{\BB_{k}}=o(1).
		\end{equation}

	\end{enumerate}

	By \eqref{eq:prob_sum}, \eqref{eq:sum_part1},  \eqref{eq:sum_part3}, and \eqref{eq:sum_part4} we get $\Pra{\BB_{\Set}} = o(1)$. Since in the above analysis we have considered the whole range of values for $|\Set|$, it immediately implies also $\Pra{\BB_{\Setbar}} = o(1)$.
	
	Now, only $\Pra{\BB_{\Set\Setbar}}$ is left to consider. For $s \in [0,1]$ let $Y_s \sim \Bin{s(1-s)n^2}{p}$, thus $\E[Y_s] = s(1-s)n^2 p$. Note that for any  $\Set \subseteq V(\Gnp)$ such that $|\Set| = \set n$ we have $e(\Set,\Setbar) \sim \Bin{\set(1-\set)n^2}{p}$, therefore by Chernoff's inequality (see \eqref{Eq:Chernoff_lower_tail} in Lemma \ref{Lemma:Chernoff}) 
	\begin{align*}
		& \Pra{\eSetSetbar  < \set(1-\set)\dd n-C\sqrt{\set(1-\set)}\dd^{1/2}n} \\
		& = \Pra{Y_\set < \E Y_{\set} -C\sqrt{\set(1-\set)}\dd^{1/2}n}
		\\
		&\le \exp\left(
		-\frac{C^2\set(1-\set)\dd n^2}{2\set(1-\set)\dd n}
		\right) =o(2^{-n}),
	\end{align*}
	where the last inequality follows by the fact that $C>\sqrt{2\ln 2}\approx 1.17741$. Taking the union bound over all $2^n$ subsets of $V(\Gnp)$, we get
	\[
		\Pra{\BB_{\Set\Setbar}}  = o(1).
	\]
\end{proof}

\begin{remark} \label{Rem:d_to_infty}
	As already mentioned, the bounds chosen for $C$ and $\dd$ in Lemma~\ref{Lem:Fluktuacje} ($C \ge 1.999$, $\dd \ge 16.17$)  are dictated by some trade-off between the final value in the upper bound on the modularity of $\Gnp$ and the smallest value of $\dd$ for which the proof is valid. One can verify that the proof of Lemma~\ref{Lem:Fluktuacje} is also valid by the assumption $C>2 \sqrt{\ln{2}} \approx 1.665$ when $\dd \to \infty$ as $n \to \infty$. This particular bound for $C$ then comes from the third case in the proof ($n/3 \leq k \leq n$) where we require that the increasing function $g$ satisfies $g(\set\dd^{1/2},C) > \ln{2}$. But at the same time, given fixed $C$ and $\set$, $g(\set\dd^{1/2},C) \to C^2/4$ as $\dd \to \infty$ thus $C$ has to exceed $2 \sqrt{\ln{2}}$.
\end{remark}

\section{Lower bound on the modularity of $\mathbf{G(n,p)}$ -- proof of Theorem \ref{Thm:Main2}} \label{Sec:lower_bound}

To get the lower bound on $\modu(\Gnp)$ we take advantage of the estimate of the minimum bisection of $\Gnp$ from \cite{Dembo2017}. There one finds also an analogous estimate for a random regular graph. The latter was used by McDiarmid and Skerman to get the lower bound on modularity of random regular graphs in \cite{McDiarmidScerman2018}.


\begin{theorem}[Theorem 1.2 of \cite{Dembo2017}]\label{Tw:MinCutGnm}
	Let $n \in \mathbb{N}$, $c\in {\mathbb R}_+$ be a constant, and let $G$ be a random graph chosen uniformly at random from all graphs  with the vertex set $[n]$ and with $\lfloor cn\rfloor$ edges. Then with high probability, for large $c$  
	\[
	\min_{\substack{\Set \subseteq V(G)\\ |\Set|-|\bar{\Set}|\in \{0,1\}}}
	\eSetSetbar=
	\left(\frac{c}{2}- P_*\sqrt{\frac{c}{2}}+o_{c}(\sqrt{c})
	\right)n,
	\]
	where $P_*=0.76321 \pm 0.00003 $ and $o_c(\cdot)$ relates to $c\to\infty$.  
\end{theorem}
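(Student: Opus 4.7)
The plan is to recast the minimum bisection as a spin-glass optimization and then identify its $c\to\infty$ leading order with the Sherrington--Kirkpatrick (SK) ground-state energy, which is where the constant $P_*$ arises. Encode a bisection by $\sigma\in\{-1,+1\}^n$ with magnetization $M(\sigma)=\sum_i\sigma_i\in\{-1,0,1\}$; identifying $\Set=\{i:\sigma_i=+1\}$, a direct computation gives
\[
\eSetSetbar = \tfrac12 e(G)-\tfrac12\sum_{i<j}J_{ij}\sigma_i\sigma_j,\qquad J_{ij}:=\mathbf{1}[\{i,j\}\in E(G)].
\]
Since $e(G)=\lfloor cn\rfloor$ is deterministic, the problem reduces to the leading asymptotics of $\max_{\sigma\text{ balanced}}\sum_{i<j}J_{ij}\sigma_i\sigma_j$. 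Writing $p\sim 2c/n$ for the edge density and $\tilde J_{ij}=J_{ij}-p$, the decomposition
\[
\sum_{i<j}J_{ij}\sigma_i\sigma_j=\tfrac{p}{2}\bigl(M(\sigma)^2-n\bigr)+H(\sigma),\qquad H(\sigma):=\sum_{i<j}\tilde J_{ij}\sigma_i\sigma_j,
\]
shows that on balanced configurations the mean-field term equals $-c+O(1)$, so the whole game is $\max_\sigma H(\sigma)$.

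The next step is a Lindeberg/Guerra--Toninelli interpolation between $H(\sigma)$ and the Gaussian SK Hamiltonian $H_{\mathrm{SK}}(\sigma)=\sqrt{2c/n}\sum_{i<j}g_{ij}\sigma_i\sigma_j$ with $g_{ij}\sim N(0,1)$. The laws of $\tilde J_{ij}$ and $\sqrt{2c/n}\,g_{ij}$ match in mean and variance; the higher cumulants of $\tilde J_{ij}$ are $O(p)=O(c/n)$, so the interpolation error in the associated partition function at inverse temperature $\beta$ is bounded by terms of size $\beta^3\cdot O(c/n)\cdot n^2$, which is $o(n\sqrt c)$ for $\beta$ tuned to the correct zero-temperature scale. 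Invoking $\max_\sigma\sum_{i<j} g_{ij}\sigma_i\sigma_j = (P_*+o(1))n^{3/2}$ (Parisi formula, proved by Talagrand and Panchenko, together with the $\beta\to\infty$ limit of the Parisi functional) yields the upper bound $\max_\sigma H(\sigma)\le(P_*+o_c(1))\sqrt{2c}\,n$.

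The matching lower bound is the core analytic difficulty and would be obtained by running the Aizenman--Sims--Starr cavity scheme directly on the free energy $F_n(\beta)=\tfrac1n\E\log\sum_\sigma e^{\beta H(\sigma)}$: the one-vertex cavity computation expresses $F_{n+1}-F_n$ via an asymptotic overlap distribution, and matching this against the diluted Parisi functional (Panchenko) identifies $\lim_n F_n(\beta)$. Taking $\beta\to\infty$ in the resulting variational problem extracts the ground-state energy, and as $c\to\infty$ the diluted Parisi formula collapses onto the SK one so that the ground state equals $P_*$. Concentration of $\max_\sigma H(\sigma)$ around its mean at scale $o(n\sqrt c)$ follows from Talagrand's inequality applied in the independent edge-indicator coordinates of $G(n,p)$.

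Two clean-up steps close the proof. The balance constraint $M(\sigma)\in\{-1,0,1\}$ does not bite at leading order: the unconstrained SK maximizer already has $M(\sigma)=O(\sqrt n)$, and flipping $O(\sqrt n)$ spins to force exact balance changes $H$ by $O(\sqrt{cn})$, which is negligible compared to $n\sqrt c$. Standard contiguity (conditioning on $|E|$) transfers the statement from $G(n,p)$ with $p=2c/n$ back to $G(n,\lfloor cn\rfloor)$. Assembling everything,
\[
\max_{\sigma\text{ balanced}}\sum_{i<j}J_{ij}\sigma_i\sigma_j = -c+(P_*+o_c(1))\sqrt{2c}\,n,
\]
whence $\min\eSetSetbar=(c/2-P_*\sqrt{c/2}+o_c(\sqrt c))n$ as stated. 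The hardest step is implementing the cavity lower bound in the dilute regime, which requires Panchenko's ultrametricity theorem for random-graph spin glasses; this is the most technical ingredient and the place where the identification of the universal constant $P_*$ with the SK Parisi ground state is genuinely used.
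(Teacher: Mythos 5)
This statement is not proved in the paper you are being compared against: it is Theorem 1.2 of Dembo, Montanari and Sen, quoted verbatim from \cite{Dembo2017} and used as a black box (the paper only sketches how to transfer it to $G(n,p)$ in Corollary~\ref{Cor:MinCutGnp}). So the relevant comparison is with the proof in \cite{Dembo2017} itself. Your reduction is the right one and matches theirs in spirit: encoding a balanced cut by $\sigma\in\{-1,+1\}^n$, writing $\eSetSetbar=\frac{1}{2}e(G)-\frac{1}{2}\sum_{i<j}J_{ij}\sigma_i\sigma_j$, centering the edge variables so that the problem becomes $\max_\sigma H(\sigma)$ with $H$ having variance profile matching $\sqrt{2c/n}\,g_{ij}$, and identifying the $c\to\infty$ correction with the SK ground-state constant $P_*$ via an interpolation comparison; your scale bookkeeping ($\max H\approx P_*\sqrt{2c}\,n$, so the bisection is $\frac{cn}{2}-\frac{1}{2}\max H+O(c)$) is consistent with the stated formula.

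The genuine gap is exactly at the step you flag as hardest. Your proposed lower bound runs the Aizenman--Sims--Starr cavity scheme on the diluted model, identifies $\lim_n F_n(\beta)$ with a diluted Parisi functional via Panchenko's ultrametricity, and then sends $\beta\to\infty$ and $c\to\infty$. A M\'ezard--Parisi-type free-energy formula for this diluted Ising spin glass at fixed $c$ is not an off-the-shelf result that you can simply invoke, and even granting it you would still need uniform control to exchange the limits $n\to\infty$, $\beta\to\infty$, $c\to\infty$ at the precision $o_c(\sqrt c)\,n$; none of this is supplied. Dembo, Montanari and Sen avoid the diluted Parisi formula entirely: their interpolation/Lindeberg comparison between the sparse Hamiltonian and the SK Hamiltonian is quantitative and two-sided at a finite inverse temperature $\beta$ chosen as a function of $c$, so the same estimate that gives your upper bound also gives the lower bound after optimizing $\beta$ against the $n\ln 2/\beta$ ground-state-extraction error, combined with the zero-temperature Parisi value (Auffinger--Chen/Talagrand--Panchenko) for the SK model with the balance constraint. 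Relatedly, your handling of the constraint $\sum_i\sigma_i\in\{-1,0,1\}$ rests on the unsupported claim that the unconstrained SK maximizer has magnetization $O(\sqrt n)$; the standard remedy is to carry the balance constraint inside the partition function throughout the interpolation and use that the constrained SK ground state has the same constant $P_*$. As written, then, your proposal is a correct high-level plan, essentially the upper-bound half of the actual argument, but it does not constitute a proof of the two-sided statement.
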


It is mentioned in \cite{Dembo2017} that the above result implies an analogue for a random graph $\Gnp$. For completeness, we formulate it as a corollary and briefly sketch the justification.

\begin{corollary}\label{Cor:MinCutGnp}
	Let $n \in \mathbb{N}$, $\dd\in {\mathbb R}_+$, $p = p(n) = \dd/n$, and $G(n,p)$ be a binomial random graph. 
	Then with high probability, for large $\dd$
	\begin{equation}\label{Eq:MinCutGnp}
		\min_{\substack{\Set \subseteq V(\Gnp)\\ |\Set|-|\bar{\Set}|\in \{0,1\}}}
		e(\Set,\bar{\Set})=
		\left(\frac{\dd}{4}-P_*\frac{\sqrt{\dd}}{2}+o_{\dd}(\sqrt{\dd})
		\right)n,
	\end{equation}
	where $P_*=0.76321 \pm 0.00003 $ and $o_{\dd}(\cdot)$ relates to $\dd\to\infty$. 
\end{corollary}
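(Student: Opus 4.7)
The plan is to deduce the corollary by transferring Theorem~\ref{Tw:MinCutGnm} from the uniform-edge model to $G(n,p)$, exploiting the fact that for $p=\dd/n$ the random edge count of $G(n,p)$ concentrates sharply around $\dd n/2$. Concretely, set $c=\dd/2$ (fixed positive real, as we let $n\to\infty$) and $m^{\ast}=\lfloor cn\rfloor=\lfloor \dd n/2\rfloor$, and apply Theorem~\ref{Tw:MinCutGnm} to the uniform random graph with $m^{\ast}$ edges. Since $c/2=\dd/4$ and $\sqrt{c/2}=\sqrt{\dd}/2$, this yields whp
\[
\min_{|\Set|-|\bar{\Set}|\in\{0,1\}}
e_{G(n,m^{\ast})}(\Set,\bar{\Set})
=\left(\frac{\dd}{4}-P_{\ast}\frac{\sqrt{\dd}}{2}+o_{\dd}(\sqrt{\dd})\right)n,
\]
which is precisely the target right-hand side of \eqref{Eq:MinCutGnp}.

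To pass from $G(n,m^{\ast})$ to $G(n,p)$, I would couple the two models on the common vertex set $[n]$ so that their edge sets agree on all but $|M-m^{\ast}|$ pairs, where $M=e(G(n,p))\sim\Bin{\binom{n}{2}}{p}$. The standard coupling: first sample $G(n,p)$, then delete $M-m^{\ast}$ uniformly random edges if $M\ge m^{\ast}$, or insert $m^{\ast}-M$ uniformly random non-edges if $M<m^{\ast}$; the result has the $G(n,m^{\ast})$ distribution. Adding or removing a single edge changes the value of any given balanced cut by $0$ or $1$, so the minimum balanced bisection is $1$-Lipschitz in edge modifications, and hence the minimum bisections of $G(n,p)$ and of $G(n,m^{\ast})$ differ by at most $|M-m^{\ast}|$. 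By Chernoff's inequality (Lemma~\ref{Lemma:Chernoff}) applied to $M$, whp $|M-\dd n/2|=O(\sqrt{\dd n\log n})$, which bounds $|M-m^{\ast}|$ up to $O(1)$. As a coefficient of $n$ this is $O(\sqrt{\dd(\log n)/n})=o(\sqrt{\dd})$, and therefore is absorbed into the $o_{\dd}(\sqrt{\dd})$ term, giving \eqref{Eq:MinCutGnp}.

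The main subtlety to navigate is that Theorem~\ref{Tw:MinCutGnm} is phrased for a \emph{deterministic} constant $c$, whereas a direct approach of conditioning on $e(G(n,p))=m$ would invoke it at the random parameter $c=m/n$ and thus require quantifying the uniformity of the $o_c(\sqrt{c})$ error in a neighborhood of $\dd/2$. Passing through the single deterministic value $m^{\ast}=\lfloor\dd n/2\rfloor$ and transferring along the Lipschitz coupling avoids this uniformity question entirely, which is the only real obstacle in the argument.
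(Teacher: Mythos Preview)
Your proof is correct and takes a genuinely different route from the paper's sketch. The paper conditions on the edge count $M=e(G(n,p))$, uses that $G(n,p)$ conditioned on $M=\lfloor cn\rfloor$ is exactly the uniform model, then applies Theorem~\ref{Tw:MinCutGnm} for every $c$ in the window $[\dd/2-\omega\sqrt{\dd/n},\,\dd/2+\omega\sqrt{\dd/n}]$ and combines by a union bound. Your argument instead fixes the single value $m^{\ast}=\lfloor \dd n/2\rfloor$, invokes Theorem~\ref{Tw:MinCutGnm} once, and transfers to $G(n,p)$ via a monotone coupling together with the observation that the minimum balanced cut is $1$-Lipschitz under single-edge edits. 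Your route has the advantage you yourself identify: it sidesteps any need for the $o_c(\sqrt{c})$ error in Theorem~\ref{Tw:MinCutGnm} to be uniform over a (shrinking) range of $c$, and it avoids controlling the sum of $O(\omega\sqrt{\dd n})$ failure probabilities in the union bound. The paper's conditioning approach is the more traditional device for $G(n,p)$--$G(n,m)$ transfer (it defers to Section~1.4 of \cite{JLRBook}) and is slightly more general in that it does not use any Lipschitz property of the statistic in question. Either argument is adequate here; your error term $|M-m^{\ast}|=O(\sqrt{\dd n\log n})=o(n)$ is indeed negligible against the $o_{\dd}(\sqrt{\dd})\,n$ window once $n\to\infty$ for fixed $\dd$.
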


\begin{proof}[Sketch of the proof]
	First, recall that $e(\Gnp) \sim \Bin{\binom{n}{2}}{p}$, i.e. $\E[ e(\Gnp)] = \frac{n\dd }{2}-\frac{\dd}{2}=(1+o(1))\frac{n\dd}{2}$. By the concentration of the binomial random variable we get that with high probability
	\begin{equation}\label{Eq:GnpEdgeConc}
		|e(\Gnp)-n\dd/2|\le \omega \sqrt{n\dd}.
	\end{equation}
	Now, let $c \in {\mathbb R}_+$ be a constant and $\omega$ a function tending arbitrarily slowly to infinity. Under the condition $e(\Gnp) = \lfloor cn \rfloor$, the graph $\Gnp$ is uniformly distributed over all graphs with vertex set $[n]$ and exactly $\lfloor cn \rfloor$ edges. 
	Apply Theorem~\ref{Tw:MinCutGnm} for all values of $\lfloor cn \rfloor$ such that $c_- \le c \le c_+$, where $c_{\pm} = \dd/2 \pm \omega \sqrt{\dd/ n}$. Finally, use a union bound over all possible values for the number of edges in $\Gnp$ and apply \eqref{Eq:GnpEdgeConc} to get the conclusion. For a more detailed analysis, we refer the reader to Section~1.4 in~\cite{JLRBook}.
\end{proof}

To prove Theorem \ref{Thm:Main2} we also need a concentration result for the number of edges within the vertex sets of $\Gnp$ whose cardinality is about $n/2$. It is presented in Lemma \ref{Lem:Fluktuacje2} below. 

\begin{lemma} \label{Lem:Fluktuacje2}
	Let $n \in \mathbb{N}$, $\dd\ge 1$, $p = p(n) = \dd/n$, and $G(n,p)$ be a binomial random graph. Let also $C> (7 \ln{2})/6 \approx 0.81$. Then {\whp} for all $\Set\subseteq V(\Gnp)$ such that $|\Set| \in \{\lfloor n/2 \rfloor, \lceil n/2 \rceil\}$
	\[
		\left|e(S)-\frac{n \dd}{8}\right| \leq C n \sqrt{\dd}.
	\] 
\end{lemma}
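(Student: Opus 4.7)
The plan is to apply the Chernoff-plus-union-bound strategy from the proof of Lemma~\ref{Lem:Fluktuacje}, but in a much simpler form: since all sets $\Set$ under consideration have essentially the same size $n/2$, no three-case split by $|\Set|$ is required. I would fix such a $\Set$, apply Chernoff's inequality (Lemma~\ref{Lemma:Chernoff}) to both tails of $e(\Set)$, and then union bound over the at most $2\binom{n}{\lfloor n/2\rfloor} \le 2^{n+1}$ candidate subsets.

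First, for $|\Set| \in \{\lfloor n/2 \rfloor, \lceil n/2 \rceil\}$ we have $e(\Set) \sim \Bin{\binom{|\Set|}{2}}{p}$ with mean $\mu$ satisfying $|\mu - n\dd/8| = O(\dd)$. Since $p \le 1$ forces $\dd \le n$, we have $\dd/(n\sqrt{\dd}) \le 1/\sqrt{n} = o(1)$, so this gap is negligible compared to $Cn\sqrt{\dd}$, and it suffices to show $|e(\Set)-\mu| \le (C-o(1))n\sqrt{\dd}$ with probability $o(2^{-n})$. Setting $t = (C-o(1))n\sqrt{\dd}$, the upper tail bound \eqref{Eq:Chernoff_upper_tail} gives
\[
\Pra{e(\Set) \ge \mu + t} \le \exp\!\left(-\frac{t^2}{2\mu + 2t/3}\right) \le \exp\!\left(-(1+o(1))\frac{C^2 n}{\frac{1}{4} + \frac{2C}{3\sqrt{\dd}}}\right),
\]
and since $\dd \ge 1$ the denominator is at most $\frac{1}{4} + \frac{2C}{3}$; the lower tail bound \eqref{Eq:Chernoff_lower_tail} analogously yields $\exp(-4C^2 n(1+o(1)))$.

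The only place where the specific value $C > (7\ln 2)/6$ is used is in verifying that the upper-tail exponent strictly exceeds $n \ln 2$ (the lower-tail condition $4C^2 > \ln 2$ is automatic since $(7\ln 2)/6 > \frac{1}{2}\sqrt{\ln 2}$). This reduces to the quadratic inequality $C^2 - \frac{2\ln 2}{3}C - \frac{\ln 2}{4} > 0$; plugging in $C = (7\ln 2)/6$ gives
\[
\frac{49 \ln^2 2}{36} - \frac{7\ln^2 2}{9} - \frac{\ln 2}{4} \;=\; \frac{3\ln 2\,(7\ln 2 - 3)}{36},
\]
which is strictly positive because $\ln 2 > 3/7$. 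I do not foresee any real obstacles beyond this tidy algebra; the hardest part is merely to track the $(1+o(1))$ corrections coming from $\mu \ne n\dd/8$ and from the simplification $\frac{1}{4} + \frac{2C}{3\sqrt{\dd}} \le \frac{1}{4} + \frac{2C}{3}$ in the Chernoff exponent, both of which are harmless because $\dd \ge 1$ forces $\sqrt{\dd}/n = o(1)$.
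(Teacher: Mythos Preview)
Your proposal is correct and follows essentially the same argument as the paper: Chernoff's inequality applied to both tails of $e(\Set)$ for a fixed half-set, followed by a union bound over at most $2^{n+1}$ such subsets, with the threshold $C>(7\ln 2)/6$ arising from the worst-case denominator $\tfrac14+\tfrac{2C}{3}$ at $\dd=1$. The only slip is the phrase ``it suffices to show $|e(\Set)-\mu|\le (C-o(1))n\sqrt{\dd}$ with probability $o(2^{-n})$'' --- you of course mean that the complementary event has probability $o(2^{-n})$, which is exactly what you then verify.
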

\begin{proof}
	Let $Z \sim \Bin{\binom{\lfloor n/2\rfloor}{2}}{p}$, i.e., $\E Z = \frac{n \dd}{8} + o(n \sqrt{\dd})$. For a fixed $\Set\subseteq V(\Gnp)$ such that $|\Set| = \lfloor n/2 \rfloor$, by Chernoff's inequality (see Lemma \ref{Lemma:Chernoff}) we have
	\begin{align*}
		\Pra{\left|e(S)-\frac{n \dd}{8}\right| \leq C n \sqrt{\dd}} & \le \Pra{|Z-\E Z| \le (1+o(1))Cn\sqrt{\dd}}\\
		& \le 2 \exp\left(-\frac{(1+o(1))C^2n^2\dd}{2\left( n\dd/8 + Cn\sqrt{\dd}/3\right)}\right)\\
		& =  2 \exp\left(-(1+o(1)) \frac{C^2}{\frac{1}{4} + \frac{2C}{3 \sqrt{\dd}}}n\right) \\
		& \le 2 e^{- (\ln{2}+0.1) n} = o(2^{-n}),
	\end{align*}
	where the last inequality is true by $C > (7 \ln{2})/6$ and $\dd\ge 1$. Since there are at most $2^n$ sets $\Set\subseteq V(\Gnp)$ such that $|\Set| \in \{\lfloor n/2 \rfloor, \lceil n/2 \rceil\}$, the conclusion follows by the union bound.
\end{proof}

\begin{proof}[Proof of Theorem \ref{Thm:Main2}]
	Let $\Part =\{\Set,\Setbar\}$ be a partition of $V(\Gnp)$ minimizing $\eSetSetbar$ over $\Set$ such that $|\Set|-|\bar{\Set}|\in \{0,1\}$.
	Then
	\begin{equation}\label{Eq:ErrDef}
		\begin{split}
			\eSet&=\frac{n\dd}{8}+\err_1+\err_0,\\
			\eSetbar&=\frac{n\dd}{8}+\err_2,\\
			\eSetSetbar&=\frac{n\dd}{4}-(\err_1+\err_2),\\
		\end{split}
	\end{equation}
	where $\err_0 = e(\Gnp) - n\dd/2$, and $\err_1$ and $\err_2$ are random variables representing the fluctuations in the number of edges within the sets $\Set$ and $\Setbar$.	Therefore, by Corollary \ref{Cor:MinCutGnp}, \eqref{Eq:GnpEdgeConc}, and Lemma~\ref{Lem:Fluktuacje2}, for large $\dd$, with high probability, we get 
	\begin{equation}\label{Eq:ErrWzory}
		\begin{split}
			\err_0&=o(n\sqrt{\dd}) ,	\\
			\err_1+\err_2&\ge \left( P_*\frac{\sqrt{\dd}}{2}+o_{\dd}(\sqrt{\dd})\right)n,\\
			|\err_i|&\le Cn\sqrt{\dd}, \text{ for }i=1,2\text{ and  } C\ge 0.81.
		\end{split}
	\end{equation}
	Now, by \eqref{Eq:ErrDef} and \eqref{Eq:ErrWzory} it follows that  for large $\dd$, with high probability
	\[
	\begin{split}
		4\eSet\eSetbar-\eSetSetbar^2
		&=
		\frac{n\dd}{2}\left(\err_1+\err_2\right)+4\err_1\err_2
		+\frac{n\dd}{2}\left(\err_1+\err_2\right)-(\err_1+\err_2)^2\\
		&\quad +\err_0\left(\frac{n\dd}{2}+4\err_2\right)\\
		&=n\dd(\err_1+\err_2)-(\err_1-\err_2)^2+o(n^2\dd^{3/2})\\
		&\ge \left(\frac{{P_*}+o_{\dd}(1)}{2\sqrt{\dd}}-\frac{4C^2}{\dd}+o\left(\frac{1}{\sqrt{\dd}}\right)\right)n^2\dd^2\\
		&=\frac{{P_*}+o_{\dd}(1)}{2\sqrt{\dd}}n^2\dd^2
	\end{split}
	\]
	and, by \eqref{Eq:GnpEdgeConc} and Fact \ref{Fact:mod_as_edges} we finally obtain
	\[
	\modu(\Gnp) \ge  \modu_{\Part}(\Gnp) \ge 2\frac{{P_*}+o_{\dd}(1)}{2\sqrt{\dd}}(1+o(1))=\frac{{P_*}+o_{\dd}(1)}{\sqrt{\dd}}.
	\]
\end{proof}



\bibliographystyle{plain}
\bibliography{modularity_Gnp}

\appendix

\section{Proof of Lemma~\ref{Lemma_f(x,d)}}

We formulate two facts before we move on to the proof of Lemma~\ref{Lemma_f(x,d)}. The first one presents standard inequalities for the logarithm function (see \cite{Love1980Log}).

\begin{fact}
We have
\begin{align}
	\label{Eq:LogLower}
	\ln(1+t)&>\frac{t}{1+t/2}&&\text{ for }t>0,\\
	\label{Eq:LogUpper} 
	\ln(1+t)&< t&&\text{ for }t>-1\text{ and }t\neq 0.
\end{align}
\end{fact}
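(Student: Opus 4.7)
Both inequalities are standard monotonicity facts, so the plan in each case is to move everything to one side, differentiate once, and read off the sign.

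For \eqref{Eq:LogUpper} the plan is to set $h(t)=t-\ln(1+t)$ on $(-1,\infty)$ and note $h(0)=0$. Differentiating gives $h'(t)=1-\tfrac{1}{1+t}=\tfrac{t}{1+t}$, which is negative on $(-1,0)$ and positive on $(0,\infty)$. Hence $h$ has a strict global minimum at $t=0$, so $h(t)>0$ for every $t>-1$ with $t\neq 0$; this is exactly \eqref{Eq:LogUpper}. No obstacle is expected here.

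For \eqref{Eq:LogLower} the plan is to rewrite the right-hand side as $\tfrac{t}{1+t/2}=\tfrac{2t}{2+t}$ and consider $k(t)=\ln(1+t)-\tfrac{2t}{2+t}$ on $[0,\infty)$, with $k(0)=0$. The key computation is to show $k'(t)>0$ for $t>0$. Differentiating,
\[
k'(t)=\frac{1}{1+t}-\frac{2(2+t)-2t}{(2+t)^2}=\frac{1}{1+t}-\frac{4}{(2+t)^2},
\]
and putting these over the common denominator $(1+t)(2+t)^2$ the numerator becomes $(2+t)^2-4(1+t)=t^2$, yielding
\[
k'(t)=\frac{t^2}{(1+t)(2+t)^2}>0 \qquad \text{for } t>0.
\]
Thus $k$ is strictly increasing on $[0,\infty)$, so $k(t)>k(0)=0$ for every $t>0$, which is \eqref{Eq:LogLower}.

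The only calculation that requires care is the algebraic collapse of the numerator of $k'(t)$ to $t^2$; once that cancellation is observed, the sign of $k'$ is immediate and both inequalities follow from the same one-variable monotonicity principle.
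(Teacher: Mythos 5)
Your proof is correct: both derivative computations check out, and in particular the numerator of $k'(t)$ does collapse to $(2+t)^2-4(1+t)=t^2$, so $k$ is strictly increasing on $(0,\infty)$ and the lower bound follows, while the sign analysis of $h'(t)=t/(1+t)$ gives the upper bound for all $t>-1$, $t\neq 0$. The paper itself offers no proof of this fact --- it simply cites it as a standard logarithm inequality (with a reference) --- so your self-contained one-variable monotonicity argument is a perfectly adequate, indeed slightly more complete, substitute.
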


The second one analyzes monotonicity of several functions.
\begin{fact}\label{Fact:Increasing}
	Let $\varphi(t) = (1+t)\ln{(1+t)}-t$ for $t>0$ (thus $\varphi$ is defined as in Lemma~\ref{Lem:Fluktuacje}). Then
	\begin{enumerate}
		\item $\varphi(t)$ is increasing in $t>0$,
		\item $g(x,z)=\frac{x^2}{2}\varphi\left(\frac{z}{x}\right)$ is increasing in $x>0$ for any $z>0$, and increasing in $z>0$ for any $x>0$,
		\item $h_1(x) = x(\ln{\left(1+\frac{z}{x}\right)}-\frac{z}{x})$ is increasing in $x>0$ for any $z>0$,
		\item $h_2(y) = y^2 \left(\ln{\left(1+\frac{3z}{y}\right)-\frac{3z}{y}}\right)$ is decreasing in $y>0$ for any $z>0$,
		\item $h_3(t)=\ln(1+t)-t$ is decreasing in $t>0$.
	\end{enumerate}
\end{fact}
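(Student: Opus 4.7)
The plan is to establish each of the five monotonicity assertions by a direct derivative computation, reducing each one to one of the logarithmic inequalities \eqref{Eq:LogLower} or \eqref{Eq:LogUpper} stated just above. Items (1) and (5) are one-line calculations: $\varphi'(t)=\ln(1+t)>0$ for $t>0$, and $h_3'(t)=1/(1+t)-1=-t/(1+t)<0$. The $z$-direction in (2) is immediate from the chain rule, since $\partial_z g(x,z)=(x^2/2)\,\varphi'(z/x)\cdot(1/x)=(x/2)\ln(1+z/x)>0$.

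For the $x$-direction of (2) and for (3) I will introduce the substitution $u=z/x>0$. The product and chain rules give $\partial_x g(x,z)=x\,\varphi(z/x)-(z/2)\ln(1+z/x)$; after expanding $\varphi$ and factoring out $x$, this simplifies to $x\bigl[(1+u/2)\ln(1+u)-u\bigr]$, and positivity is exactly \eqref{Eq:LogLower} rewritten as $\ln(1+u)>u/(1+u/2)$. Similarly, $h_1'(x)=\ln(1+u)-u/(1+u)$; this is positive because $u/(1+u/2)>u/(1+u)$ for $u>0$, so \eqref{Eq:LogLower} again suffices.

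I expect the main (still mild) obstacle to be (4). The cleanest route is the substitution $u=3z/y>0$, under which
\[
h_2(y)=\frac{9z^2}{u^2}\bigl(\ln(1+u)-u\bigr)=9z^2\,\tilde h(u),\qquad \tilde h(u):=\frac{\ln(1+u)-u}{u^2}.
\]
Since $du/dy<0$, proving that $h_2$ is decreasing in $y$ reduces to proving that $\tilde h$ is increasing in $u$. A direct computation yields $\tilde h'(u)=N(u)/u^3$ with $N(u)=2u-2\ln(1+u)-u^2/(1+u)$, and the one algebraic step to verify is that, after combining over the common denominator $(1+u)^2$, one obtains $N'(u)=u^2/(1+u)^2\ge 0$. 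Together with $N(0)=0$ this gives $N\ge 0$, hence $\tilde h'\ge 0$ on $(0,\infty)$. Beyond this single identity, nothing more than elementary calculus and the two log inequalities from the preceding fact is required.
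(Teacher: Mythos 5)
Your proposal is correct. Items (1), (2), (3) and (5) follow the same route as the paper: direct differentiation reduced to the elementary log bounds, with your computation $2\,\partial_x g=(2x+z)\ln(1+z/x)-2z$ and its positivity via \eqref{Eq:LogLower} matching the paper exactly; the only cosmetic difference is in (3), where the paper rewrites $h_1'(x)=-\ln\bigl(1-\tfrac{z}{x+z}\bigr)-\tfrac{z}{x+z}$ and invokes \eqref{Eq:LogUpper}, whereas you bound $\ln(1+u)>\tfrac{u}{1+u/2}>\tfrac{u}{1+u}$ via \eqref{Eq:LogLower}; both are fine. The genuine divergence is in (4): the paper shows $h_2''(y)>0$ using \eqref{Eq:LogLower}, concludes $h_2'$ is increasing, and then needs the boundary information $\lim_{y\to 0}h_2'(y)=-3z<0$ and $\lim_{y\to\infty}h_2'(y)=0$ to deduce $h_2'<0$; you instead exploit the scaling $u=3z/y$ to reduce to the single-variable function $\tilde h(u)=(\ln(1+u)-u)/u^2$ and prove $\tilde h'\ge 0$ from $N(u)=2u-2\ln(1+u)-u^2/(1+u)$, $N(0)=0$, $N'(u)=u^2/(1+u)^2\ge 0$ (I checked this identity; it holds). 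Your treatment of (4) is self-contained, avoids the limit argument, and does not even need the logarithm inequalities for that item, at the cost of introducing the auxiliary function $N$; the paper's version stays closer to its uniform derivative-plus-\eqref{Eq:LogLower} template. Either argument is complete and rigorous.
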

\begin{proof}

\begin{enumerate}
	\item Obviously $\varphi(t)$ is increasing in $t>0$ as $\varphi'(t)=\ln(1+t)>0$ for $t>0$.  

	\item We consider the function
\[
g(x,z)=\frac{x^2}{2}\varphi\left(\frac{z}{x}\right)=\frac{x^2}{2}\left(\left(1+\frac{z}{x}\right)\ln\left(1+\frac{z}{x}\right)-\frac{z}{x}\right),\quad x,z>0.
\]
Given $x>0$, it is increasing in $z>0$, as $\varphi(z/x)$ is increasing in $z$ and $x^2>0$ is set. 

Now set $z>0$ and define ${\bar g}(x)=  g(x,z)$. Using \eqref{Eq:LogLower}, we get
\[
2{\bar g}'(x)=(2x+z)\ln\left(1+\frac{z}{x}\right)-2z>(2x+z)\frac{\frac{z}{x}}{1+\frac{z}{2x}}-2z=0,
\]
thus $g(x,z)$ is increasing in $x>0$ for any $z>0$.
	
	\item Set $z>0$. Note that by \eqref{Eq:LogUpper} for $x>0$
	\[
		h_1'(x)=-\ln\left(1-\frac{z}{z+x}\right)-\frac{z}{z+x}>0
	\]
	thus $h_1(x)$ is increasing in $x>0$ for any $z>0$.
	
	\item Set $z>0$. We have
	\[
		h_2'(y) = 2y \ln{\left( 1 + \frac{3 z}{y} \right)} - \frac{3z(2y+3z)}{y+3z}
	\]
	and, by \eqref{Eq:LogLower},
	\[
		h_2''(y) =  2 \ln{\left( 1 + \frac{3 z}{y} \right)} - \frac{3z(2y+9z)}{(y+3z)^2} > \frac{2 \frac{3z}{y}}{1+\frac{3z}{2y}} - \frac{3z(2y+9z)}{(y+3z)^2} > 0 \quad \text{ for } \quad y>0.
	\]
	Thus $h_2'(y)$ is increasing in $y>0$ and, since $\lim_{y \to 0} h_2'(y) = -3z<0$ and $\lim_{y \to \infty} h_2'(y) = 0$, we get that $h_2'(y)<0$ for $y>0$. Consequently, $h_2(y)$ is decreasing in $y>0$ for any $z>0$.
	
	\item Obviously, $h_3(t)$ is decreasing in $t>0$ as $h_3'(t)=\frac{1}{1+t}-1<0$ for $t>0$.
\end{enumerate}
\end{proof}

\begin{proof}[Proof of Lemma \ref{Lemma_f(x,d)}]

Recall that for $x,y,z>0$
\[
f(x,y,z)
=\frac{xy}{2}\varphi\left(\frac{z}{x}\right)-\ln\frac{y}{x}-1
=\frac{xy}{2}\left(\left(1+\frac{z}{x}\right)\ln\left(1+\frac{z}{x}\right)-\frac{z}{x}\right)-\ln\frac{y}{x}-1.
\]
Let also $h_1$ and $h_2$ be defined as in Fact \ref{Fact:Increasing}. Set $y,z>0$ and let $\bar{f}:(0,y/3]\to \mathbb{R}$ be given by
\[
\bar{f}(x):=f(x,y,z).
\]
Then
\[
\bar{f}'(x)=\frac{y x\left(\ln\left(1+\frac{z}{x}\right)-\frac{z}{x}\right)+2}{2x} = \frac{yh_1(x)+2}{2x}.
\]
By Fact \ref{Fact:Increasing}(iii) $h_1(x)$ is increasing in $x$ for any $z>0$. Therefore for $0<x\le y/3$, when $z = 1.999$ and $y\ge3.95$ we have
\begin{equation} \label{eq:1}
y h_1(x)+2
\le y h_1\left(\frac{y}{3}\right)+2
= \frac{y^2}{3}\left(\ln\left(1+\frac{3z}{y}\right)-\frac{3z}{y}\right)+2 = \frac{1}{3}h_2(y)+2<0,
\end{equation}
where the last inequality follows by Fact~\ref{Fact:Increasing}(iv) and the fact that $\frac{1}{3}h_2(3.95) + 2 < 0$ by $z=1.999$.
Hence $\bar{f}(x)$ is decreasing in $x>0$ and, when $z = 1.999$ and $y \ge 3.95$, for $1\le x\le y/3$, by Fact~\ref{Fact:Increasing}(ii), we get
\begin{equation} \label{eq:2}
\begin{split}
\bar{f}(x)\ge  \bar{f}\left(\frac{y}{3}\right) &= \frac{y^2}{6}\left(\left(1+\frac{3z}{y}\right)\ln\left(1+\frac{3z}{y}\right)-\frac{3z}{y}\right)-\ln3-1\\
& =3\, g\left(y/3,z\right)-\ln3-1\ge 3\, g\left(3.95/3,1.999\right) -\ln3-1> 0.001.
\end{split}
\end{equation}
Note that if \eqref{eq:1} and \eqref{eq:2} are true for $z=1.999$, by Fact~\ref{Fact:Increasing}(i) and Fact~\ref{Fact:Increasing}(v) they are also true for any $z \ge 1.999$. This proves the first part of the lemma.

Now let us consider $g(x,z)=\frac{x^2}{2}\varphi\left( \frac{z}{x} \right)$ for  $x,z>0$.
By Fact~\ref{Fact:Increasing}(ii), for $x \ge 1.34$ and $z \ge 1.999$,  we simply get
\[
g(x,z)\ge h(1.34,1.999)  > \ln{2} + 0.01.
\]
This proves the second part of the lemma.


\end{proof}

\end{document}